\definecolor{lightcopper}{rgb}{.93, .76, .58}
\newcommand{\Rone}{\mathbb{Z}}
\newcommand{\Rmor}[1]{\mathbb{Z}^{#1}}
\newcommand{\Tone}[1]{\mathbb{Z}_{#1}}
\newtheorem{theorem}{Theorem}
\newtheorem{proposition}[theorem]{Proposition}
\newtheorem{corollary}[theorem]{Corollary}
\newtheorem{example}{Example}
\newtheorem{remark}[theorem]{Remark}
\renewenvironment{proof}[1][Proof]{\textit{#1.} }
{\hfill \rule{0.5em}{0.5em}}
\newcommand{\tn}{\textnormal}
\begin{document}

\title{A pattern for torsion in Khovanov homology}

\author{R. D\'{\i}az and P. M. G. Manch\'on
\footnote{R. Díaz is partially supported by Spanish Research Project PID2020-114750GB-C32/AEI/10.13039/501100011033. P. M. G. Manchón is partially supported by Spanish Research Project PID2020-117971GB-C21.}}
\maketitle


\begin{abstract}
We prove that certain specific sum of enhanced states produce torsion of order two in the Khovanov homology.
\end{abstract}

\textbf{Keywords:} \emph{Enhanced state, Khovanov homology, torsion}

\textbf{MSC Class:} \emph{57M25, 57M27.}

\section{Introduction} \label{SectionIntroduction}
Understanding how torsion appears and what geometric meaning it has in the Khovanov homology of links and knots is a relevant problem. In 2004 Shumakovitch \cite{conjectureShumakovitch} conjectured that all links (except the trivial knot, the Hopf link and their disjoint unions and connected sums) have torsion. In \cite{Asaeda} Asaeda and Przytycki proved that certain semi-adequate links have torsion of order two if, roughly speaking, the $A$-smoothing of the semi-adequate diagram has a cycle of order odd or even, the torsion appearing then in the penultimate or antepenultimate quantum degree respectively.

Many other papers have dealt with the problem of torsion in the Khovanov homology since then. In \cite{Mukherjee}, Mukherjee showed examples that disproved some conjectures by Przytycki and Sazdanovi\'{c} relating the torsion with the braid index of the links. However, one of these conjectures is still open, and believed to be true: the Khovanov homology of a link obtained as the closure of a braid with three strands can have only torsion of order two. In \cite{ShumakovitchThinLinksHaveTorsionOnlyOfOrderTwo}
Shumakovitch proved that thin links (those whose Khovanov homology is supported in two consecutive diagonals $2i-j=s\pm 1$) have only torsion of order two. In \cite{Chandler}, Chandler, Lowrance, Sazdanovi\'{c} and Summers proved a local version of this result, and using the classification of conjugacy classes of braids with three strands by Murasugi, they provide more evidence for supporting the above conjecture. The most important tools used in that paper are the exact sequences in Khovanov homology and the spectral sequences (of Lee, Turner, Bockstein -see \cite{Chandler} for more details and references). 

In this paper we find patterns for the $A$-smoothing of a diagram that allow us to ensure the existence of torsion (see Theorem~\ref{BasicTheorem} and Corollary~\ref{MainCorollary}). By contrast with the mentioned techniques, we show specific elements that define torsion elements of order two in Khovanov homology. These patterns have certain symmetry, and they already appear in the trefoil knot or in the Borromean link. Furthermore, we show (Example~\ref{ExampleNonAlternating}) that these patterns appear in infinitely many non-alternating knots. Moreover, by contrast to what happens in \cite{Asaeda}, our elements of torsion can be found in any homological degree, and they are not restricted to semi-adequate diagrams. 

Although techniques of exact sequences could be also used to obtain our results (at least partially), we think that our approach has the benefit of constructing explicitly the torsion elements. In \cite{Kindred}, the problem of finding specific chains that define non-zero elements in the Khovanov homology is addressed. The elements found in \cite{Kindred} are some sort of traces defined as an alternating sum of enhancements of a unique Kauffman state. The torsion elements found in this paper are linear combinations of different enhanced states.

The paper is organized as follows: in Section~\ref{SectionKhovanov} we briefly review the combinatorial definition of Khovanov homology due to Viro, which we use later. In Section~\ref{SectionPattern} we show the basic patterns and prove the main theorem of the paper, Theorem~\ref{BasicTheorem}. Some examples are shown. Finally, Section~\ref{SectionBipartite} proves a practical consequence of the main theorem, Corollary~\ref{MainCorollary}. This corollary will allow to produce much more examples in which we can explicitly provide torsion elements of order two. The paper ends with some comments related to possible generalizations and to the plumbing construction.

\section{Khovanov homology} \label{SectionKhovanov}
The Khovanov homology of knots and links was introduced by Mikhail Khovanov at the end of last century (\cite{Khovanov}, \cite{Bar}). In~\cite{Viro} Viro interpreted it in terms of enhanced states of diagrams. We will use the Viro's point of view, with some simplifications of the homological and quantum/polynomial indexes taken from~\cite{DasbachLowrance}. 

Let $D$ be an oriented diagram of an oriented link $L$, with $p$ positive crossings (\includegraphics[scale=0.1]{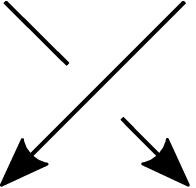}) and $n$ negative crossings (\includegraphics[scale=0.1]{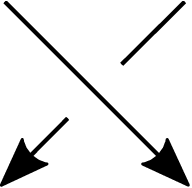}). Let $w(D)=p-n$ be the writhe of $D$.  Let $\text{Cro}(D)$ be the set of crossings of $D$. A (Kauffman) state $s$ of $D$ is a map $s:\text{Cro}(D)\rightarrow \{A,B\}$. If we smooth each crossing of $D$ according to the label of $s$ in the crossing, as shown in Figure~\ref{FigureCrossingSmoothing}, we obtain a set $sD$ of $|sD|$ disjoint simple curves, called the circles of $sD$ (see Figure~\ref{FigureEjemploEstadoRealzado}). We will draw a small chord (blue if the label is $A$, red if $B$) to remember which was the state. An enhanced state is a pair $(s,e)$ where $s$ is a state and $e$ is an assignation of signs, $-$ or $+$, to each circle of $sD$. For short, we usually write just $s$ instead of $(s,e)$ to refer to a particular enhanced state. Let $\theta(s)$ be the number of circles with sign $+$ minus the number of circles with sign $-$. We then define the homological degree $i(s)$ of an enhanced state $s$ as the number of $B$-labels of the underlying Kauffman state $s$, and its quantum (or polynomial) degree as $j(s)=i(s)+\theta(s)$. 
\begin{figure}[ht!]
	\labellist
	\pinlabel {$A$} at 140 60
	\pinlabel {$B$} at 310 60
	\endlabellist
	\begin{center}
		\includegraphics[scale=0.4]{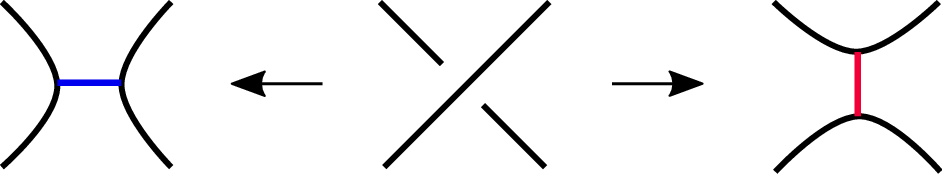}
	\end{center}
	\caption{Smoothing of a crossing according to the label}\label{FigureCrossingSmoothing}
\end{figure}

\begin{figure}[ht!]
	\labellist
	\pinlabel {$A$} at 50 135
	\pinlabel {$A$} at 193 130
	\pinlabel {$B$} at 125 -8
	\pinlabel {$-$} at 470 140
	\pinlabel {$-$} at 390 50
	\pinlabel {$sD$} at 350 150
	\endlabellist
	\begin{center}
		\includegraphics[scale=0.3]{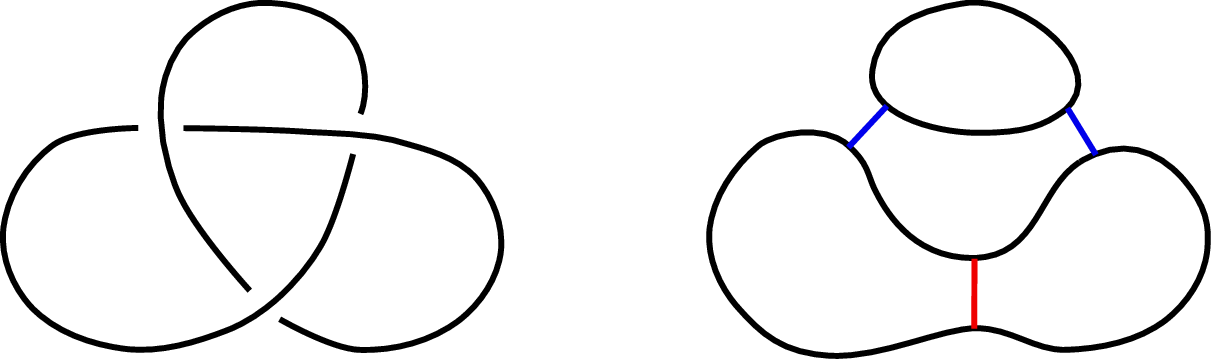}
	\end{center}
	\caption{Enhanced state $s$ with $s$ with $i(s)=1$, $\theta(s)=-2$ and $j(s)=-1$, hence $s\in C^{1,-1}(D)$. Here $|sD|=2$} \label{FigureEjemploEstadoRealzado}
\end{figure}

Let $s$ be an enhanced state with $i(s)=i$ and $j(s)=j$. An enhanced state $t$ is adjacent to $s$ if $i(t)=i(s)+1$ and $j(t)=j(s)$, the state $t$ assigns the same labels as $s$ except in one crossing $x=x(s,t)$, where $s(x)=A$ and $t(x)=B$, and $t$ assigns the same signs as $s$ to their common circles. The crossing $x(s,t)$ will be called the change crossing from $s$ to $t$. Passing then from $sD$ to $tD$ can be realized by merging two circles in one, or splitting one circle into two. Affected circles 
are those touching the crossing $x(s,t)$. The possibilities for the signs of these circles, according to the previous definition, are shown in Figure~\ref{FigurePossibilities}.
\begin{figure}[ht!]
	\begin{center}
		\includegraphics[scale=0.5]{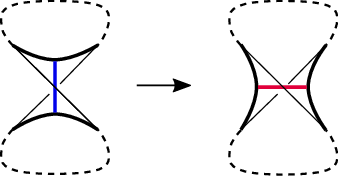} 
		\qquad \qquad
		\includegraphics[scale=0.5]{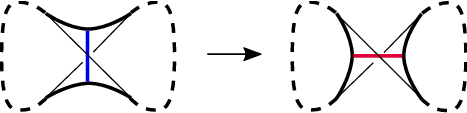}
	\end{center}
	\caption{Possible mergings: $(+,+)\mapsto +$, $(+,-)\mapsto -$ and $(-,+)\mapsto -$. Possible splittings: $+\mapsto (+,-)$ or $+\mapsto (-,+)$ and  $-\mapsto (-,-)$.}
	\label{FigurePossibilities}
\end{figure}

Let $s$ and $t$ be two enhanced states. The incident number $i(s,t)$ of $s$ over $t$ is defined as follows: if $t$ is adjacent to $s$, then $i(s,t)=(-1)^k$ where $k$ is the number of crossings of~$D$ where $s$ has a $B$-label, previous to the change crossing~$x(s,t)$; otherwise, $i(s,t)=0$. 

Let $R$ be a commutative ring with unit. Let $C^{i,j}(D)$ be the free module over~$R$ generated by the set of enhanced states~$s$ of~$D$ with $i(s)=i$ and $j(s)=j$. Numerate from $1$ to $n$ the crossings of $D$. Now fix an integer $j$ and consider the chain complex
$$
\ldots \longrightarrow C^{i,j}(D) \stackrel{d_i}{\longrightarrow} C^{i+1,j}(D) \longrightarrow \ldots 
$$
with differential $d_i(s)=\sum i(s,t)t$, where the sum runs over all the enhanced states $t$. It turns out that $d_{i+1}\circ d_i=0$. The corresponding homology modules over $R$,
$$
\underline{Kh}^{i,j}(D)=\frac{\text{ker} (d_i)}{\text{im}(d_{i-1})},
$$
are called the Khovanov homology of the diagram $D$ for degrees homological~ $i$ and polynomial~$j$. It turns out that the $R$-modules
$Kh^{h,q}(L) := \underline{Kh}^{i,j}(D)$ where $i=h+n$ and $j=q-p+2n$, are independent of the diagram $D$ and the order of its crossings; they are the Khovanov homology modules of the oriented link $L$ (\cite{Khovanov}, \cite{Bar}) as presented by Viro \cite{Viro} in terms of enhanced states, and with degrees considered as in \cite{DasbachLowrance}.

One last remark is in order. If $s$ in an enhanced state, then $d(s)=\displaystyle \sum_{x\in \text{Cro}(D)} d_x(s)$ where:
\begin{itemize}
	\item  $d_x(s)=0$ if $s(x)=B$ or $s(x)=A$ and the corresponding $A$-chord in $s_AD$ joins two different circles, both with sign $-$,
	\item $d_x(s)=(-1)^ks_{x\to B}^{+-} + (-1)^k s_{x\to B}^{-+}$ in case of splitting of a circle $+$,
	\item $d_x(s)=(-1)^ks_{x\to B}^{--}$ in case of splitting of a circle $-$,
	\item $d_x(s)=(-1)^ks_{x\to B}^{+}$ in case of merging of two circles $+$, and
	\item $d_x(s)=(-1)^ks_{x\to B}^{-}$ in case of merging two circles $+-$ or $-+$.
\end{itemize}  
Here $s_{x\to B}$ is the Kauffman state obtained from $s$ by relabeling $s_{x\to B}(x)=B$, and $k$ is the number of crossings $y\in \text{Cro}(D)$ previous to $x$ such that $s(y)=B$. The signs in the exponent of $s_{x\to B}$ refer to the signs of the circles touching the crossing $x=x(s,s_{x\to B})$.
Finally, we can consider $d_x(Z)$ for a chain $Z$, extending the definition by linearity.
\section{A pattern with monochords} \label{SectionPattern}

	Let $g$, $h$ be integers with $g,h \geq 2$. Let $D$ be an oriented diagram of an oriented link $L$. We say that $D$ is a diagram of type $D(g,h)$ if $s_AD$ has exactly $g+h$ $A$-monochords, with all their extremes in the same circle (called main circle), $g$ parallel $A$-monochords outside and $h$ parallel $A$-monochords inside, as in Figure~\ref{GeneralPattern}, and with no circles and no extra $A$-chords between each pair of consecutive parallel such $A$-monochords. 
	Note that, in principle, $s_AD$ can have other circles and bichords (chords with extremes in two different circles). We say that $D$ is a mono-circular diagram of type $D(g,h)$ if in addition $|s_AD|=1$, that is, $s_AD$ has no extra circles. Note that, in this case, the diagram is just the standard diagram of the pretzel knot $P(-1, \stackrel{(g)}{\dots},-1, h)$. 
	
	\begin{figure}[ht!]
		\labellist
		\pinlabel {$g$} at 65 87
		\pinlabel {$h$} at 215 63
		\pinlabel {\begin{tabular}{c} Nothing\\ here\end{tabular}} at -30 75
		\pinlabel {\begin{tabular}{c} Nothing\\ here\end{tabular}} at 335 75
		\endlabellist
		\begin{center}
			\includegraphics[scale=0.8]{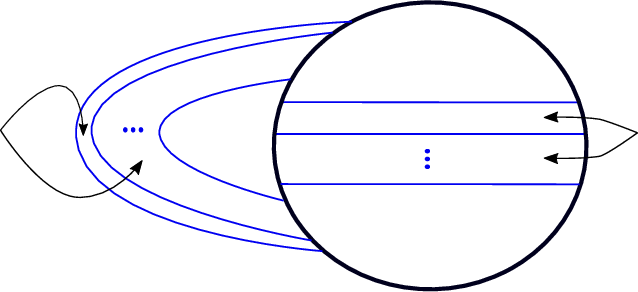}
		\end{center}
		\caption{The pattern with $g+h$ $A$-monochords. We refer to the drawn circle as the main circle.}
		\label{GeneralPattern}
	\end{figure}
	
	In this section we will work with a mono-circular diagram $D$ of type $D(g,h)$. In order to establish our first result we need to fix some notation. Let $1\leq s\leq g$ and $1\leq r\leq h$. We will write $s=s_{i_1\dots i_s;j_1\dots j_r}$ to denote the state with $r+s$ labels of type $B$, associated to the crossings that correspond to the parallel $A$-monochords $i_1, \dots, i_s$ outside, and the parallel $A$-monochords $j_1, \dots, j_r$ inside. In figures this state will be recognized by painting in red the corresponding $A$-monochords of $s_AD$. If we consider only inner monochords, then $sD$ has $r+1$ circles in a row, named circles $0$, $1$ up to $r$. We will denote by $s_{\,;j_1\dots j_r}^{l_1\dots l_t}$ ($0\leq l_1 < \dots < l_t \leq r$) the corresponding enhanced state that assign $-$ to the circles $l_1, \dots, l_t$ of $sD$. If in addition we paint red one outer monochord, the two extreme circles are merged in one; we refer to this circle as circle $0$ (note that the circle $0$ can be two different things, depending on the red chords chosen). We denote by $s^0_{i;j_1\dots j_r}$ the enhanced state that assign $-$ only to this merged circle $0$. Relevant examples are shown in Figure~\ref{XVVprima}. Basically, subscripts indicate the red chords, previous to `;' monochords outside, after `;' monochords inside, and superscripts indicate the circles with sign $-$, although, caution, we will need to break this rule in Section~\ref{SectionBipartite}, when adding extra circles and bichords to $s_AD$. 
	
	\begin{proposition} \label{proposition:dX=2V}
	Let $D$ be a mono-circular diagram of type $D(g,h)$, with $g, h~\geq~2$. Let $r$ be an integer with $1\leq r \leq h$. Let us consider the following chains: 
	$$
	X= \sum_{1\leq j_1<\dots < j_r\leq h}
	\left( 
	s_{\,;j_1\dots j_r}^{0} + s_{\,;j_1\dots j_r}^{r}
	\right)  \quad  \in C^{r,2r-1}(D), 
	$$
	$$
	V =
	\sum_{1\leq j_1<\dots<j_r\leq h} \sum_{i=1}^g 
	s_{i; j_1\dots j_r}^{0} \quad \in C^{r+1,2r-1}(D),
	$$
	and, if $r<h$,
	$$
	V'=
	\sum_{ 1\leq j_1<\dots < j_r<j_{r+1}\leq h} 
	s_{\,;j_1\dots j_r j_{r+1}}^{0\,(r+1)} \quad \in C^{r+1,2r-1}(D).
	$$
	Then:
	\begin{itemize}
		\item[(a)] $d(X)=2V$ if $r$ is odd or if $r=h$, and
		\item[(b)] $d(X)=2V+2V'$ if $r$ is even and $r<h$.
	\end{itemize}
\end{proposition}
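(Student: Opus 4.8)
The plan is to compute $d(X)=\sum_{J}\big(d(s^{0}_{\,;J})+d(s^{r}_{\,;J})\big)$ crossing by crossing, using the itemized description of $d=\sum_x d_x$, while exploiting the freedom in numbering the crossings (the homology is independent of it). Here and below I abbreviate the subscript $j_1\dots j_r$ by $J=\{j_1<\dots<j_r\}$. First I would fix a numbering in which all $g$ outer monochords precede the $h$ inner ones; this makes every contribution coming from an outer monochord carry the sign $(-1)^0=+1$. For a fixed $J$ the crossings fall into three kinds: the $B$-labelled inner monochords $j_1,\dots,j_r$, which contribute $0$; the $g$ outer monochords, each of which is a \emph{merge} of the two extreme circles $0$ and $r$; and the $h-r$ inner monochords not in $J$, each of which is a \emph{split} of one of the circles $0,1,\dots,r$. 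Since $d$ preserves the quantum degree, every resulting term lies in $C^{r+1,2r-1}(D)$.

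For the merges, observe that in $s^{0}_{\,;J}$ the circle $0$ is $-$ and the circle $r$ is $+$, while in $s^{r}_{\,;J}$ it is the reverse; in both cases merging a $+$ with a $-$ yields a single circle with sign $-$, which is precisely the merged circle $0$ of $s^{0}_{i;J}$. With the chosen numbering each such term has coefficient $+1$, so summing over the outer monochord $i$ and over $J$, and adding the identical contributions of $s^{0}_{\,;J}$ and $s^{r}_{\,;J}$, the merge part of $d(X)$ is exactly $2V$. This already settles statement (a) when $r=h$, since then $J=\{1,\dots,h\}$ is forced and there are no inner monochords left to split.

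The crux is the split part. Here I would fix an $(r+1)$-element set $J'=\{j'_1<\dots<j'_{r+1}\}$ and record which two-minus states with inner labels $J'$ arise, carefully tracking how the circles $0,\dots,r$ get relabelled as $0,\dots,r+1$ under the extra smoothing. Writing $J=J'\setminus\{j'_c\}$, smoothing $j'_c$ splits the circle numbered $b=c-1$ and carries the sign $(-1)^{c-1}$. A direct check then shows that the splits coming from the states $s^{0}_{\,;J}$ produce only states of the form $s^{0,q}_{\,;J'}$ with $1\le q\le r+1$, whereas those coming from $s^{r}_{\,;J}$ produce only states $s^{p,r+1}_{\,;J'}$ with $0\le p\le r$. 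The decisive computation is that every \emph{interior} two-minus state receives exactly two contributions, from $c$ and $c+1$, with opposite signs $(-1)^{c-1}$ and $(-1)^{c}$, and hence cancels. The only survivor is the boundary state $s^{0\,(r+1)}_{\,;J'}$, which picks up $(-1)^{r}$ from the $s^{0}$-family and $+1$ from the $s^{r}$-family. Summing over $J'$, the split part equals $(1+(-1)^r)\,V'$.

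Putting the two parts together yields $d(X)=2V+(1+(-1)^r)V'$, which is $2V$ when $r$ is odd and $2V+2V'$ when $r$ is even (and $r<h$, so that $V'$ is defined), as claimed. I expect the main obstacle to be the sign bookkeeping in the split part: one must pin down, for each position $c$, which of the two circles created by the split inherits the $-$ sign, and then verify that the two contributions to every interior state genuinely cancel rather than reinforce one another. Fixing the outer-before-inner numbering at the outset is what keeps the merge signs — and therefore the coefficient of $V$ — clean throughout.
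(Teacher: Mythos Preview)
Your proposal is correct and follows essentially the same route as the paper: split $d(X)$ into the outer-monochord contributions (merges), which give $2V$, and the inner-monochord contributions (splits), then show that the split part telescopes so that only the boundary state $s^{0\,(r+1)}_{\,;J'}$ survives with coefficient $1+(-1)^r$. Your explicit choice of a numbering with outer crossings first is exactly what the paper uses implicitly to make the merge signs equal to $+1$, and your indexing of the split analysis by the deleted position $c$ in $J'$ is just a reparametrization of the paper's indexing by the added inner monochord $j$.
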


\begin{figure}[ht!]
	\labellist
	\endlabellist
	\begin{center}
		\includegraphics[scale=0.50]{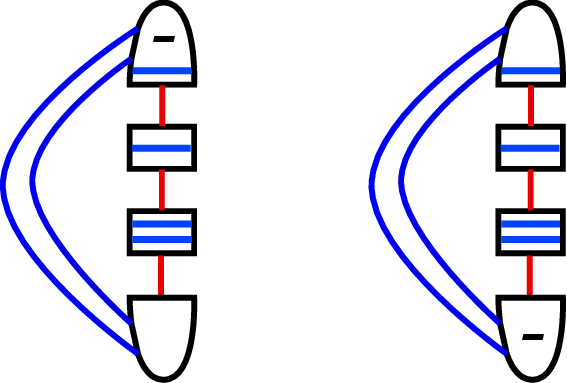}
		\qquad\qquad
		\includegraphics[scale=0.50]{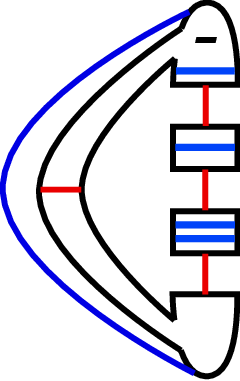}
		\qquad\qquad
		\includegraphics[scale=0.50]{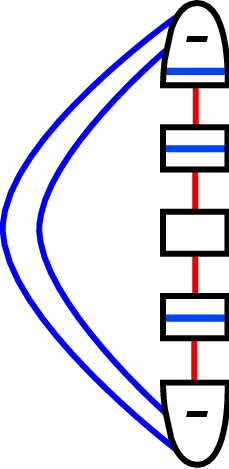}
	\end{center}
	\caption{Enhanced states $s_{\,;247}^0$ y $s_{\,;247}^r$ of $X$, $s_{2;247}^0$ of $V$, and $s_{\,;2457}^{0\, (r+1)}$ of $V'$, for $g=2$, $h=7$ and $r=3$}
	\label{XVVprima}
\end{figure}

\begin{proof}
	Let $\text{Cro}(D)=\{ x_1, \dots, x_g, y_1, \dots, y_h\}$ be the set of crossings of $D$, where the crossing $x_i$ (resp. $y_j$) corresponds to the $i$th outer (resp. $j$th inner) $A$-monochord. We will short $d_i=d_{x_i}$ (resp. $d_j=d_{y_j}$) by using the index $i\in \{ 1, \dots, g\}$ (resp. $j\in \{ 1, \dots, h\}$), hence $d_i(X)$ and $d_j(X)$ are the subchains of $d(X)$ obtained by respectively changing the outer $i$th chord and the inner $j$th chord from label $A$ to label $B$, and applying the differentiation rules. A fundamental observation is that
	$$
	d_i(s_{\,;j_1\dots j_r}^{0}) = d_i(s_{\,;j_1\dots j_r}^{r}) = s_{i; j_1\dots j_r}^{0},
	$$
	which should be clear since the outside $A$-chord $i$ in $s_{; j_1\dots j_r}D$ joins the circles $0$ and $r$, hence there is a merging $(+,-) \rightarrow -$ or $(-,+) \rightarrow -$ when this chord becomes red. 
	Using this, we have
	\begin{eqnarray*}
		d(X)&= &\sum_{ 1\leq j_1<\dots < j_r\leq h} 
		d(s_{\,;j_1\dots j_r}^{0}  +s_{\,;j_1\dots j_r}^{ r})
		\\
		&=& \sum_{1\leq j_1<\dots < j_r\leq h}
		\left(
		\sum_{i=1}^g d_i(s_{\,;j_1\dots j_r}^{0} +s_{\,;j_1\dots j_r}^{r}) + 
		\sum_{j\in\{1,\dots , h\}\setminus\{j_1,\dots, j_r\}} d_j(s_{\,; j_1\dots j_r}^{0} + s_{\,; j_1\dots j_r}^{ r})
		\right) \\
		&=& 2V + 
		\sum_{\tn{\begin{tabular}{c} $1\leq j_1<\dots < j_r\leq h$ \\ $j\in\{1,\dots , h\}\setminus\{ j_1,\dots, j_r\} $\end{tabular}}} %
		d_j(s_{\,;j_1\dots j_r}^{0}  + s_{\,;j_1\dots j_r}^{r}) \\
		&=& 2V+W,
	\end{eqnarray*}
	where we have used the previous observation to obtain $2V$ and where $W$ denotes the remaining chain. Notice that $W$ is a linear combination of states of the form $s_{\,;l_1\dots l_{r+1}}^{\eta_1\eta_2}$  with $1\leq l_1<\dots <l_{r+1}\leq h$ and $\eta_1,\eta_2\in\{0,1,\dots, {r+1}\}$. Let us see which is the coefficient of each $s_{\,;l_1\dots l_{r+1}}^{\eta_1\eta_2}$ in $W$. 
	
	Notice that 
	$$ d_j(s_{; j_1\dots j_r}^{0}) =\left\{
	\begin{array}{lcr}  
		0 & \rm if & j \in \{ j_1, \dots, j_r\}, \vspace{0.1cm}\\
		s_{\,; jj_1\dots j_r}^{01} &\rm if & 1\leq j<j_1, \vspace{0.1cm} \\
		(-1)^k ( s_{\,;  j_1\dots j_kjj_{k+1} j_r}^{0 k}+s_{\,;  j_1\dots j_kjj_{k+1} j_r}^{0\, (k+1)})
		&\rm if & j_k<j<j_{k+1}, \vspace{0.1cm} \\
		(-1)^r  ( s_{\,;  j_1\dots  j_rj}^{0 r}+s_{\,;j_1\dots j_rj}^{0\ (r+1)}) &\rm if & j_r< j. 
	\end{array}
	\right.
	$$
	Similarly,  
	$$ d_j(s_{; j_1\dots j_r}^{ {r}}) =
	\left\{
	\begin{array}{lcr}
		0 & \rm if & j \in \{ j_1, \dots, j_r\}, \vspace{0.1cm}\\  
		s_{\,; jj_1\dots j_r}^{0\,(r+1)}+s_{\,; jj_1\dots j_r}^{1\,(r+1)}  &\rm if & 1\leq j<j_1, \vspace{0.1cm} \\
		(-1)^k (s_{\,;j_1\dots j_kjj_{k+1} j_r}^{k\,(r+1)} 
		+ s_{\,;j_1\dots j_kjj_{k+1} j_r}^{(k+1)\,(r+1)})
		&\rm if & j_k<j<j_{k+1}, \vspace{0.1cm} \\
		(-1)^r   s_{\,;j_1\dots j_rj}^{r\,(r+1)}
		&\rm if & j_r<j. 
	\end{array}
	\right.
	$$
	
	In particular, the coefficient of $s^{\eta_1\eta_2}_{\,;l_1\dots l_{r+1}}$ in $W$ is zero if $\{ \eta_1, \eta_2\} \cap \{0, r+1\} = \emptyset$. 
	
	Consider now $l_1,\dots, l_{r+1}$ with $1\leq l_1<\dots <l_{r+1}\leq h$ and let $k\in\{ 1,\dots, r\}$. The only non-trivial incident numbers involving $s_{;l_1\dots l_{r+1}}^{0 k}$ are 
	$$
	i(s_{\,;l_1\dots \hat l_{k} \dots l_{r+1}}^{0}, s_{\,;l_1\dots l_{r+1}}^{0 k})=(-1)^{k-1} 
	\quad {\rm and} \quad 
	i(s_{\,;l_1\dots \hat l_{k+1} \dots l_{r+1}}^{0}, s_{\,;l_1\dots l_{r+1}}^{0 k}) 
	= (-1)^k.
	$$
	This means that the coefficient of $s_{\,;l_1\dots l_{r+1}}^{0 k}$ is zero, for any $l_1,\dots, l_{r+1}$ and $k\in\{ 1,\dots,  r\}$. 
	
	Similarly, the only non-trivial incident numbers involving $s_{\,;l_1\dots l_{r+1}}^{k\,(r+1)}$ are 
	$$
	i(s_{\,;l_1\dots \hat l_{k}\dots l_{r+1}}^{r}, s_{\,;l_1\dots l_{r+1}}^{k\, (r+1)})=(-1)^{k-1} 
	\quad {\rm and} \quad 
	i(s_{\,;l_1\dots \hat l_{k+1} \dots l_{r+1}}^{r}, s_{\,;l_1\dots l_{r+1}}^{k\,(r+1)})
	= (-1)^k.
	$$
	Hence the coefficient of $s_{\,;l_1\dots l_{r+1}}^{k\,(r+1)}$ is also zero for any $l_1,\dots, l_{r+1}$ and $k\in\{ 1,\dots,  r\}$. 
	
	It remains to analyze the coefficient of $s_{\,;l_1\dots l_{r+1}}^{0\, (r+1)}$ for any $l_1,\dots, l_{r+1}$. The only non-trivial incidence numbers involving this state are 
	$$
	i(s_{\,;l_2\dots l_{r+1}}^{r}, s_{\,;l_1l_2\dots l_{r+1}}^{0\,(r+1)}) = 1 
	\quad {\rm and}\quad 
	i(s_{\,;l_1\dots l_{r}}^{0}, s_{\,;l_1\dots l_rl_{r+1}}^{0\,(r+1)})
	= (-1)^{r}.
	$$
	Hence, if $r$ is odd, the coefficient of $s_{\,;l_1\dots l_{r+1}}^{0\,( r+1)}$ is also zero for all $l_1,\dots, l_{r+1}$, and we get finally that $W=0$, proving (a). If $r$ is even, then the coefficient of $s_{\,;l_1\dots l_{r+1}}^{0\,(r+1)}$ is equal to 2 and
	$$
	W
	= 2 \sum_{1\leq l_1<\dots <l_{r+1}\leq h} s_{\,;l_1\dots l_{r+1}}^{0\,(r+1)} 
	=2V',
	$$
	thus proving (b).
\end{proof}

	Let ${\cal B}$ be an $R$-submodule of $C^{i,j}(D)$ generated by a set of enhanced states $\{Y_k\}$ with homological degree $i$ and quantum degree $j$. Consider the projection map $\pi_{\cal B}:C(D)\rightarrow C^{i,j}(D)$. This map is the unique $R$-linear map such that, for any enhanced state $Y$, $\pi_{\cal B}(Y)=Y$ if $Y\in\{Y_k\}$ and is zero otherwise. The {\it augmentation} map $\varepsilon:C(D)\to R$ is the $R$-linear map that sends each enhanced state to 1. As in~\cite{Kindred}, maps of the form $\varepsilon \circ \pi_{\cal B}\circ d:C(D)\rightarrow R$ will be useful to prove that some enhanced states are not exact. 

\begin{theorem}\label{BasicTheorem}
	Consider a mono-circular diagram $D$ of type $D=D(g,h)$, with $g,h\geq 2$. Let $r$ be an odd integer such that $1\leq r<h$. Then the Khovanov homology group $\underline{Kh}^{r+1,2r-1}(D)$ has a torsion element of order two. 
\end{theorem}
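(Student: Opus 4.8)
The plan is to take $[V]$ as the candidate torsion class and to establish the two properties that make it an element of order exactly two: that $2[V]=0$ and that $[V]\neq 0$.

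Both halves of the first property come directly from Proposition~\ref{proposition:dX=2V}. Since $r$ is odd, part (a) gives $d(X)=2V$. As $d^2=0$ we get $2\,d(V)=d(d(X))=0$, and because the groups $C^{i,j}(D)$ are free over $\mathbb{Z}$ this forces $d(V)=0$; thus $V$ defines a class in $\underline{Kh}^{r+1,2r-1}(D)$. Moreover $2[V]=[d(X)]=0$, so $[V]$ is annihilated by $2$ and everything reduces to proving that $V$ is not a boundary.

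For non-triviality I would use an augmentation functional of the kind advertised before the theorem. Concretely, I look for a submodule $\mathcal{B}$ generated by enhanced states of $C^{r+1,2r-1}(D)$ such that (i) the integer $(\varepsilon\circ\pi_{\mathcal B}\circ d)(s)$ is even for every enhanced state $s\in C^{r,2r-1}(D)$, while (ii) $\varepsilon(\pi_{\mathcal B}(V))$ is odd. If $V=d(W)$ with $W=\sum_s c_s\,s$, then by linearity $\varepsilon(\pi_{\mathcal B}(V))=\sum_s c_s\,(\varepsilon\circ\pi_{\mathcal B}\circ d)(s)$ would be even by (i), contradicting (ii); hence $V$ is not exact and $[V]\neq 0$. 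This is consistent with $2[V]=0$, since applying $\varepsilon\pi_{\mathcal B}$ to $d(X)=2V$ only asserts that an even number equals $(\varepsilon\circ\pi_{\mathcal B}\circ d)(X)$, which (i) already guarantees. It also shows why one must argue with parity rather than with an exact integral cocycle $\varphi$: any such $\varphi$ satisfies $2\varphi(V)=\varphi(d(X))=0$, so it pairs to $0$ with the $2$-torsion class $V$.

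The construction of $\mathcal{B}$ is where the real work lies. A direct check shows that $C^{r,2r-1}(D)$ is spanned by two families: the inner-only states $s_{\,;j_1\dots j_r}^{\eta}$ (one negative circle) and the one-outer states $s_{i;\,j_1\dots j_{r-1}}$ with all circles positive. Taking $\mathcal{B}$ to be the full support $\{s_{i;\,j_1\dots j_r}^{0}\}$ of $V$ fails condition (i): by the fundamental observation in the proof of Proposition~\ref{proposition:dX=2V}, every outer chord contributes, so $(\varepsilon\circ\pi_{\mathcal B}\circ d)(s_{\,;j_1\dots j_r}^{0})=g$ (and likewise for $s^{r}$), which need not be even. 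I would therefore keep only an odd sub-collection of the states of $V$ in $\mathcal{B}$---a single $s_{i_0;\,j_1\dots j_r}^{0}$ already forces (ii), with $\varepsilon\pi_{\mathcal B}(V)=1$---and then restore (i) by adjoining correction states disjoint from the support of $V$: inner states $s_{\,;l_1\dots l_{r+1}}^{\eta_1\eta_2}$, available precisely because $r<h$, to cancel the residual parity on the inner-only states $s^{0},s^{r}$, and two-outer states to cancel it on the one-outer family. The required incidences are exactly those tabulated in Proposition~\ref{proposition:dX=2V} for the inner part, together with their one-outer analogues, organised by the same case split $j<j_1$, $j_k<j<j_{k+1}$, $j_r<j$.

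The main obstacle is thus the combinatorial bookkeeping behind (i): checking that, after the corrections are added, every incidence sum $(\varepsilon\circ\pi_{\mathcal B}\circ d)(s)$ is even. The delicate points are the parity $g$ coming from the outer monochords and the fact that $s_{\,;j_1\dots j_r}^{0}$ and $s_{\,;j_1\dots j_r}^{r}$ feed into the \emph{same} states of $V$; the hypotheses $r$ odd (so that $d(X)=2V$ rather than $2V+2V'$) and $r<h$ (so that the inner correction states exist) are exactly what make the cancellation possible. Once (i) and (ii) hold, $V$ is a cycle that is not a boundary with $2[V]=0$, i.e. an element of order two in $\underline{Kh}^{r+1,2r-1}(D)$, as claimed.
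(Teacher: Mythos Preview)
Your overall strategy coincides with the paper's: show $d(X)=2V$ via Proposition~\ref{proposition:dX=2V}(a), deduce that $V$ is a cycle with $2[V]=0$, and then rule out exactness using a functional $\varepsilon\circ\pi_{\mathcal B}\circ d$ with values in $2\mathbb{Z}$ while $\varepsilon(\pi_{\mathcal B}(V))=1$. Where your proposal falls short is precisely where you say the ``real work'' lies: you never actually produce a $\mathcal B$ and verify condition (i). The paper does this explicitly, and its choice is more specific than what you sketch. For $r>1$ it takes
\[
B=\{\,s_{\,;1\,2\cdots (r+1)}^{0k}\ :\ k=1,\dots,r+1\,\}\ \cup\ \{\,s_{1;\,2\cdots (r+1)}^{k}\ :\ k=0,\dots,r-1\,\},
\]
so the inner correction states have \emph{fixed} indices $1,\dots,r+1$ (not general $l_1<\cdots<l_{r+1}$), and the remaining corrections are \emph{one}-outer states $s_{1;2\cdots(r+1)}^{k}$ with the minus sign moved along the row, not the two-outer states you proposed. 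Only the state $s_{1;2\cdots(r+1)}^{0}$ lies in the support of $V$, giving $\varepsilon(\pi_{\mathcal B}(V))=1$. Condition (i) is then a short case check: the only enhanced states $s\in C^{r,2r-1}(D)$ that can hit $B$ are $s_{\,;1\cdots\hat{j}\cdots(r+1)}^{\eta}$, $s_{\,;2\cdots(r+1)}^{\eta}$ and $s_{1;2\cdots\hat{j}\cdots(r+1)}$, and for each of these $\pi_{\mathcal B}(d(s))$ is a sum of exactly two generators of $B$ (or zero), so $\varepsilon\circ\pi_{\mathcal B}\circ d$ lands in $2\mathbb{Z}$.

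Your instinct about two-outer states is not wrong, but it applies only to the boundary case $r=1$: there the set $B$ above fails because $\varepsilon\circ\pi_{\mathcal B}\circ d(s_{1;\,}^{0})=-1$, and the paper repairs this by enlarging to
\[
B_1=\{\,s_{\,;12}^{01},\ s_{\,;12}^{02},\ s_{1;2}^{0},\ s_{12;\,}^{02},\ s_{12;\,}^{12}\,\},
\]
which does use two outer reds and requires $g\geq 2$. So your proof needs two separate explicit constructions ($r>1$ and $r=1$) together with the corresponding incidence checks; without them the argument is only a plan.
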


\begin{proof} 
	We will show that the class $[V]$ of the element $V\in C^{r+1,2r-1}(D)$ defined in Proposition~\ref{proposition:dX=2V} is a torsion element of order $2$ in $\underline{Kh}^{r+1,2r-1}(D)$. 
	
	According to item (a) of Proposition~\ref{proposition:dX=2V}, there is an element $X$ with $d(X)=2V$. In particular $0 = d^2(X) = d(2V) = 2d(V)$ and, since $C^{r+2,2r-1}(D)$ is a free abelian group, it must be $d(V)=0$. Hence $V$ is a cycle and $2V$ is exact. Then, in order to see that $V$ defines a torsion element of order $2$, it remains to see that~$V$ is not an exact element. 
	
	For it, assume first that $r>1$ and consider the $R$-submodule ${\cal B}$ of $C^{r+1,2r-1}(D)$ generated by the following set $B$ of enhanced states (see Figure~\ref{GeneratorsB}): 
	$$
	B = 
	\left\{ s_{;1 \dots r+1}^{0k}\,|\,  k=1,\dots, r+1  \right\}
	\cup 
	\left\{s_{1; 2\dots r+1}^{k}  \,|\,  k=0,1,\dots , r-1 \right\}. 
	$$
	
	\begin{figure}[ht!]
		\labellist
		\endlabellist
		\begin{center}
			\includegraphics[scale=0.4]{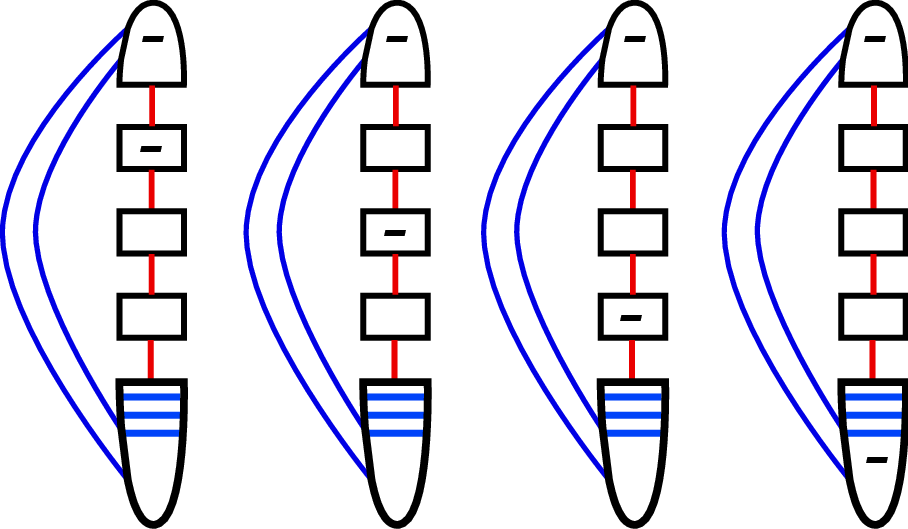}
			\qquad\quad
			\includegraphics[scale=0.4]{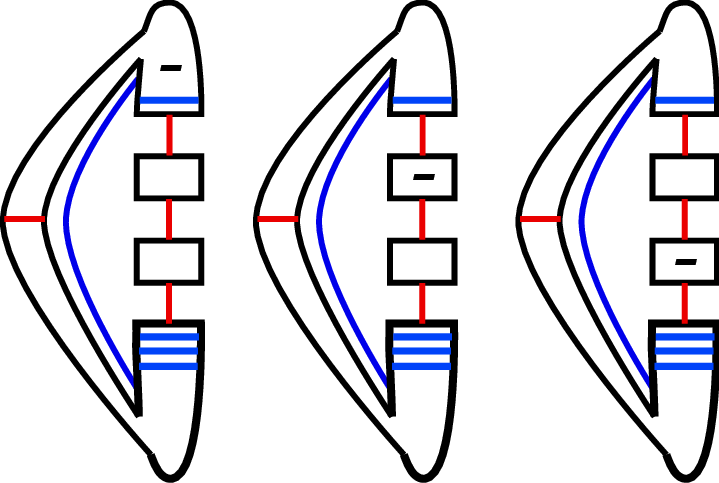}
		\end{center}
		\caption{The enhanced states, generators of ${\cal B}$, for $g=2$, $h=7$ and $r=3$}
		\label{GeneratorsB}
	\end{figure}
	
	By contradiction, suppose that $V$ is exact, that is, there is an element $Y\in C^{r,2r-1}(D)$ such that $d(Y)=V$. Notice that $\pi_{\mathcal B}(V)=s_{1; 2\dots r+1}^{ 0}$, thus $\varepsilon\circ \pi_{\mathcal B}(V)=~1$. On the other hand, we will check that the image of $\varepsilon \circ \pi_{\mathcal B}\circ d$ is contained in $2R$, which would be a contradiction. It is enough to check this for the generators of $C^{r,2r-1}(D)$, and it is also enough to do the computations modulo~$2$. This condition means that, in the matrix of $d$ (with respect to the bases formed by enhanced states), the intersection of each column with the files corresponding to the elements of~$B$ has an even number of non-zero entries. So, in the following computations, we will ignore the sign of the incidence numbers, taking only care if it is $0$ or $1$. 
	Then, if $s\in C^{r,2r-1}(D)$ is an enhanced state, we have that
	$$
	\varepsilon \circ \pi_{\cal B} \circ d (s)
	= \sum_{k=1}^{r+1}i(s,s_{;1 \dots r+1}^{ 0 k} )  
	+ \sum_{k=0}^{r-1}  i(s,s_{1; 2\dots r+1}^{k}). 
	$$
	Notice that $i(s, s_{;1 \dots r+1}^{ 0 k}) \not= 0$ implies that 
	$s= s_{;1 \dots \hat{j}\dots r+1}^{\eta}$ with $\eta \in \{0, \dots, r\}$ and $j\in \{1, \dots, r+1\}$. On the other hand, 
	$i(s, s_{1; 2\dots r+1}^{k}) \not= 0$ implies that 
	$s=s^\eta_{;2\dots r+1}$ with $\eta\in\{0, \dots, r\}$
	or 
	$s=s_{1;2\dots\hat{j}\dots r+1}$ with $j\in \{ 2, \dots, r+1\}$. 
	
	\begin{figure}[ht!]
		\labellist
		\endlabellist
		\begin{center}
		\includegraphics[scale=0.6]{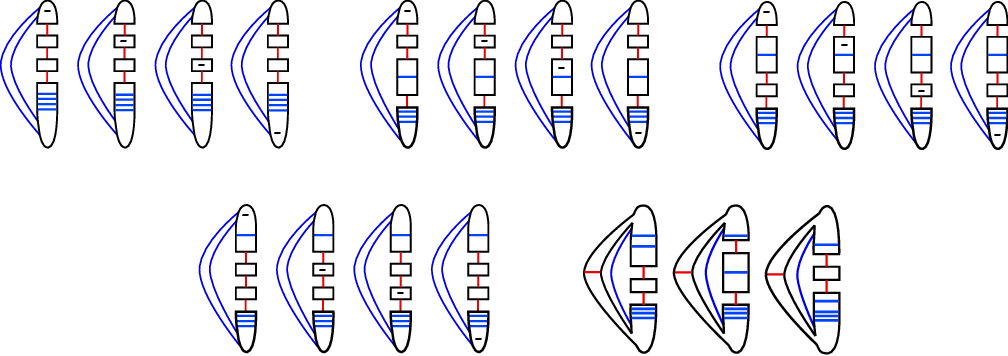}
		\end{center}
		\caption{Possible primitives of the generators in $B$, for $g=2$, $h=7$ and $r=3$. Top row: $s= s_{;1 \dots \hat{j}\dots r+1}^{\eta}$ with $\eta \in \{0, \dots, r\}$ and $j\in \{2, \dots, r+1\}$. Bottom row: $s=s^\eta_{;2\dots r+1}$ with $\eta\in\{0, \dots, r\}$ and $s=s_{1;2\dots\hat{j}\dots r+1}$ with $j\in \{ 2, \dots, r+1\}$} 
		\label{GeneratorsBPrimitivas}
	\end{figure}
	
	Thus it is enough to compute (modulo 2) the image of these states under the composition $\varepsilon \circ \pi_{\mathcal B}\circ d$, which we do next.
	
	For each $j\in \{ 2, \dots , r+1\}$,  
	$$
	\pi_{\cal B} \circ d(s_{ ;1\dots \hat j\dots  r+1}^{k})= 
	\left\{
	\begin{array}{ll}
		\pm  s_{ ;1 \dots r+1}^{0(j-1)} \pm  s_{ ;1\dots r+1}^{0j} 
		& \text{if } k=0, 
		\vspace{0.1cm} \\
		0 & {\rm otherwise.}
	\end{array}
	\right.
	$$
	
	When $j=1$ is blue (has an $A$-label),
	$$
	\pi_{\cal B} \circ d(s_{\,;2\dots r+1}^{k})= 
	\left\{
	\begin{array}{lcr}
		\pm s_{1;2\dots r+1}^{0} \pm s_{ ;12\dots r+1}^{01} 
		& {\rm if} & 
		k=0, 
		\vspace{0.1cm} \\
		\pm s_{1;2\dots r+1}^{k} \pm  s_{ ;12\dots r+1}^{0(k+1)} 
		& {\rm if} & 
		k\not=0, r, 
		\vspace{0.1cm} \\
		\pm s_{1;2\dots r+1}^{0} \pm  s_{ ;12\dots r+1}^{0\,(r+1)}  
		& {\rm if} & 
		k=r.
		\vspace{0.1cm} 
	\end{array}
	\right.
	$$
	
	Again, for $j\in \{2, \dots, r+1\}$ 
	$$
	\pi_{\cal B} \circ d(s_{1;2\dots \hat j\dots  r+1})
	= \pm s_{1 ;2\dots r+1}^{(j-1)} \pm s_{1 ;2\dots r+1}^{(j-2)}.
	$$
	It follows that $\varepsilon \circ \pi_{\cal B} \circ d (s) \in \{ -2, 0, 2\}$ for any enhanced state $s$, so the image of $\varepsilon \circ \pi_{\cal B} \circ d$ is contained in $2R$. This concludes the proof in the case $r>1$.
	
	
	\medskip
	
	Assume now that $r=1$. In this case $B$ would be the set $\{ s_{\,;12}^{01}, s_{\,;12}^{02} \} \cup \{ s_{1;2}^{0}\}$ and the above argument does not work since
	$$
	\varepsilon \circ \pi_{\cal B} \circ d(s_{1;\, }^0) 
	= \varepsilon (-s_{1;2}^0) = -1.
	$$
	The solution is, for $r=1$, to consider the new $R$-submodule ${\cal B}_1$ generated by the set of enhanced states
	$$
	B_1 = \{ s_{\,;12}^{01}, s_{\,;12}^{02} \} \cup \{ s_{1;2}^{0}\}
	\cup \{ s_{12;\,}^{02}, s_{12;\,}^{12}\}.	
	$$
	As before, if $d(s)=\sum_{t} i(s,t) t$, then $\pi_{{\cal B}_1}(d(s)) = \sum_t i(s,t) \pi_{{\cal B}_1}(t)$, hence we need to calculate $i(s,t)$ when $\pi_{{\cal B}_1}(t) \not= 0$, that is, for $t \in B_1$. We have:
	\begin{itemize}
		\item If $i(s,s_{\,;12}^{01}) \not= 0$ then $s=s_{\,;1}^0$ or $s=s_{\,;2}^0$.
		\item If $i(s,s_{\,;12}^{02}) \not= 0$ then $s=s_{\,;1}^0$ or $s=s_{\,;2}^1$.
		\item If $i(s,s_{1;2}^{0}) \not= 0$ then $s=s_{\,;2}^0$ or $s=s_{\,;2}^1$ or $s=s_{1;\,}^0$ or $s=s_{1;\,}^1$.
		\item If $i(s,s_{12;\,}^{02}) \not= 0$ then $s=s_{1;\,}^0$ or $s=s_{2;\,}^1$.
		\item If $i(s,s_{12;\,}^{12}) \not= 0$ then $s=s_{1;\,}^1$ or $s=s_{2;\,}^1$.
	\end{itemize}
	For these primitive elements, we have that (although not necessary, we easily include the right signs of the coefficients) 
	$$
	\pi_{\cal B}\left(d(s_{\,;1}^{0})\right) 
	= -s_{\,;12}^{01}-s_{\,;12}^{02},  
	\quad
	\pi_{\cal B}\left(d(s_{\,;2}^{0})\right) 
	= s_{\,;12}^{01}+s_{1;2}^{0},  
	\quad
	\pi_{\cal B}\left(d(s_{\,;2}^{1})\right) 
	= s_{\,;12}^{02}+s_{1;2}^{0},  
	$$
	and
	$$
	\pi_{\cal B} \circ d(s_{1;\,}^{0})=\pm s_{1;2}^{0}\pm s_{12;\,}^{02}, 
	\quad 
	\pi_{\cal B} \circ d(s_{1;\,}^{1})=\pm s_{1;2}^{0}\pm s_{12;\,}^{12},
	\quad
	\pi_{\cal B} \circ d(s_{2;\,}^{1}) = \pm s_{12;\,}^{02}\pm s_{12;\,}^{12}.  
	$$
	Hence, again, the image of $\varepsilon \circ \pi_{{\cal B}_1} \circ d$ is contained in $2R$, and the proof is completed. 
	\end{proof}	

%

\medskip

\begin{remark}
	If $r=g=1$ in the above theorem, then $V$ is exact. Indeed, 
	$$
	d(-s_{1; }^0) = \sum_{j=1}^h s_{1;j}^0 = V.
	$$
\end{remark}
	
\begin{example}
	The torsion in the Khovanov homology of the trefoil knot can be explained by our pattern, as Figure~\ref{PatternTrefoil} shows. For this case $g=h=2$ and $r=1$. The corresponding $s_AD$ is shown in Figure~\ref{FigureKey}. The torsion element is $V = s_{1;1}^{0} + s_{1;2}^{0} + s_{2;1}^{0} + s_{2;2}^{0}$ (Figure~\ref{FigureMainEnhancedStates}) and $X=s_{\,;1}^0 + s_{\,;1}^1 + s_{\,;2}^0+s_{\,;2}^1$ is a linear combination of enhancements of the states $x_1=s_{;1}$ and $x_2=s_{;2}$ shown in Figure~\ref{FigureStatesxonextwo}. Table~\ref{TableTrefoil} allows to check that $d^1(X)=V+V$.
	 	
	\begin{figure}[ht!] 
	\labellist
	\endlabellist
	\begin{center}
		\includegraphics[scale=0.4]{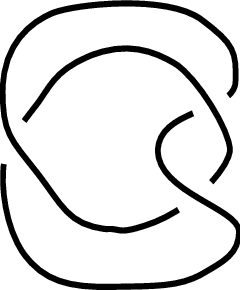}
		\qquad
		\includegraphics[scale=0.4]{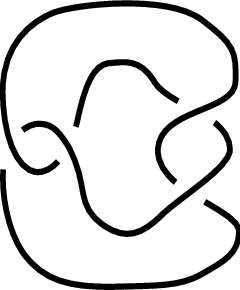}
		\qquad
		\includegraphics[scale=0.4]{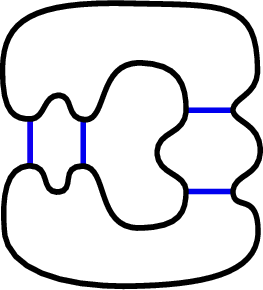}
	\end{center}
	\caption{The trefoil knot, another diagram $D$ of it and $s_AD$}
	\label{PatternTrefoil}
	\end{figure}
 	  
	\begin{figure}[ht!]
		\labellist
		\endlabellist
		\begin{center}
			\includegraphics[scale=0.3]{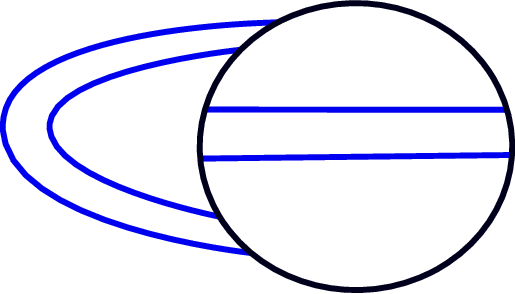}
		\end{center}
		\caption{The $A$-smoothing $s_AD$ in the case $g=h=2$}
		\label{FigureKey}
	\end{figure}
	
	\begin{figure}[ht!] 
		\labellist
		\endlabellist
		\begin{center}
			\includegraphics[scale=0.5]{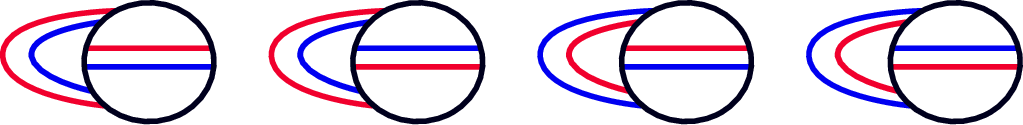}
		\end{center}
		\caption{States $s_{1;1}$, $s_{1;2}$, $s_{2;1}$ and $s_{2;2}$ for $g=h=2$ and $r=1$}
		\label{FigureMainEnhancedStates}
	\end{figure}
	
	\begin{figure}[ht!] 
		\labellist
		\endlabellist
		\begin{center}
			\includegraphics[scale=0.5]{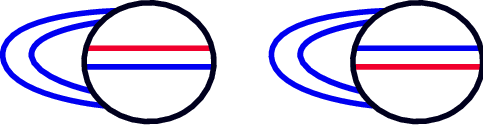}
		\end{center}
		\caption{States $x_1 = s_{\,;1}$ and $x_2 = s_{\,;2}$ for $g=h=2$ and $r=1$}
		\label{FigureStatesxonextwo}
	\end{figure}

\begin{table}
	$$
	\begin{array}{|c|c|c|c|c|c|}
		\hline
		&&\includegraphics[scale=0.7]{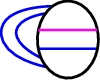}&&\includegraphics[scale=0.7]{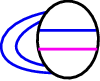}&\\
		&&\includegraphics[scale=0.7]{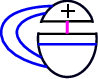}&\includegraphics[scale=0.7]{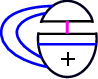}&\includegraphics[scale=0.7]{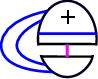}&\includegraphics[scale=0.7]{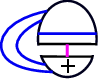}\\
		\hline
		\includegraphics[scale=0.7]{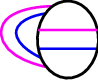}& \includegraphics[scale=0.7]{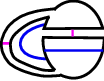}&1&1&&\\
		\hline	
		\includegraphics[scale=0.7]{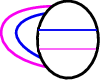}&
		\includegraphics[scale=0.7]{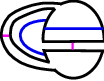}&&&1&1\\
		\hline
		\includegraphics[scale=0.7]{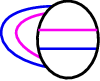}&
		\includegraphics[scale=0.7]{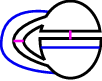}&1&1&&\\
		\hline
		\includegraphics[scale=0.7]{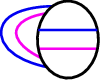}&
		\includegraphics[scale=0.7]{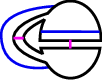}&&&1&1\\
		\hline\hline
		\includegraphics[scale=0.7]{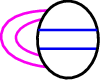}&	\includegraphics[scale=0.7]{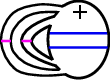}&&&&\\
		\hline
		&\includegraphics[scale=0.7]{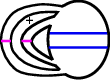}&&&&\\
		\hline
		&\includegraphics[scale=0.7]{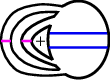}&&&&\\
		\hline \hline
		\includegraphics[scale=0.7]{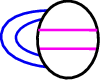}&	\includegraphics[scale=0.7]{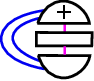}&-1&&1&\\
		\hline
		&\includegraphics[scale=0.7]{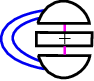}&&-1&1&\\	
		\hline
		&\includegraphics[scale=0.7]{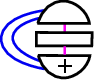}&&-1&&1\\
		\hline
	\end{array}
	$$
	\caption{This table proves that $d^1(X)=V+V$ for the trefoil diagram in Figure~\ref{PatternTrefoil}.}
	\label{TableTrefoil}
\end{table}
\end{example}

\begin{example} \label{ExampleMirrorSixOne}
	Consider the mirror image $K$ of the knot $6_1$. Figure~\ref{FigureKnotMirrorSixOne} shows this knot, a mono-circular diagram $D$ of type $D(2,5)$ for it, and the corresponding $s_AD$.
	The Khovanov homology of $K$ is shown in Table~\ref{TableKhovanovHomologyKnotMirrorSixOne}. In this table we show the homological and quantum degrees $h$ (not to be confused with the number~$h$ of inner monochords) and $q$ respectively for the knot $K$, and, in blue with large numbers, the corresponding degrees $i$ and $j$ for the diagram~$D$. Since $D$ has $p=2$ positive crossings and $n=5$ negative crossings, it follows that $Kh^{h,q}(L) = \underline{Kh}^{i,j}(D)$ with $i=h+n=h+5$ and $j=q-p+2n=q+8$.
	By Theorem~\ref{BasicTheorem}, for mono-circular diagrams of type $D(2,5)$ and $r=1$ and $r=3$, we obtain the $\mathbb{Z}_2$ torsion in the rows corresponding to the polynomial degrees $j=1$ and $j=5$ respectively. This example also shows the necessity of the hypothesis $r<h$ in Theorem~\ref{BasicTheorem}; otherwise there would be torsion in $C^{i,j}(D)=C^{r+1,2r-1}(D)=C^{6,9}(D)$, which is not true.
	\begin{figure}[ht!]
		\labellist
		\endlabellist
		\begin{center}
			\includegraphics[scale=0.35]{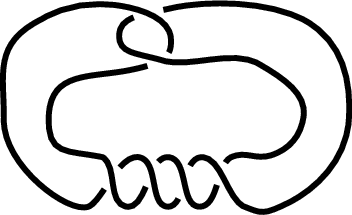}
			\qquad  
			\includegraphics[scale=0.35]{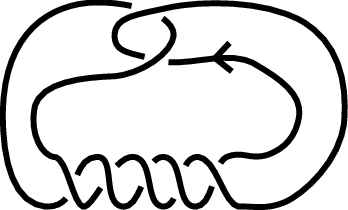}
			\qquad  
			\includegraphics[scale=0.35]{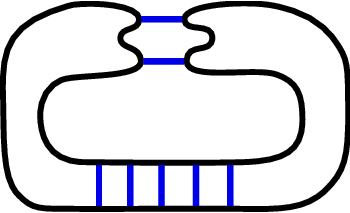}
		\end{center}
		\caption{Mirror image $K=\overline{6_1}$ of the knot $6_1$, a mono-circular diagram $D$ of~$K$ of type $D(2,5)$, and the corresponding $s_AD$}
		\label{FigureKnotMirrorSixOne}
	\end{figure}
	
	\begin{table}
		\begin{center}
		\setlength\extrarowheight{2pt}
		\begin{tabular}{|c||c|c|c|c|c|c|c|}
			\hline
			\backslashbox{\!{\color{blue}j} \tiny{$q$}\!}{\!{\color{blue}i} \tiny{$h$}\!} & {\color{blue}1} \tiny{$-4$} & {\color{blue}2} \tiny{$-3$} & {\color{blue}3} \tiny{$-2$} & {\color{blue}4} \tiny{$-1$} & {\color{blue}5} \tiny{$0$} & {\color{blue}6} \tiny{$1$} & {\color{blue}7} \tiny{$2$} \\
			\hline
			\hline
			{\color{blue}13} \tiny{$-5$}&&&&&&&$\Rone$\\
			\hline
			{\color{blue}11} \tiny{$3$} \,\, &&&&&&&$\Tone{2}$\\
			\hline
			{\color{blue}9} \tiny{$1$} &&&&&$\Rmor{2}$&{\cellcolor{lightgray}{}}$\Rone$&\\
			\hline
			\, {\color{blue}7} \tiny{$-1$}&&&&$\Rone$&$\Rone\oplus\Tone{2}$&&\\
			\hline
			\, {\color{blue}5} \tiny{$-3$}&&&&$\Rone\oplus{\color{red}\Tone{2}}$ &&&\\
			\hline
			\, {\color{blue}3} \tiny{$-5$}&&$\Rone$&$\Rone$&&&&\\
			\hline
			\, {\color{blue}1} \tiny{$-7$}&&{\color{red}$\Tone{2}$}&&&&&\\
			\hline
			\,{\color{blue}-1} \tiny{$-9$}&$\Rone$&&&&&&\\
			\hline
		\end{tabular}
		\end{center}
		\caption{Khovanov homology of the knot $K=\overline{6_1}$. The large blue indexes $i$ and $j$ correspond to the Khovanov homology of the diagram $D$ of $K$}
		\label{TableKhovanovHomologyKnotMirrorSixOne}
	\end{table}
\end{example}

\section{Extending the theorem to the bipartite case} \label{SectionBipartite}
Let $D$ be a diagram of type $D(g,h)$, and suppose that the graph defined by $s_AD$ minus the monochords, is bipartite. We want to construct elements of torsion of order two, hence extending Theorem~\ref{BasicTheorem}

We start by generalizing Proposition~\ref{proposition:dX=2V}. Assume that $r$ is an odd integer and $r<h$. We first explain a new notation for dealing with the signs of the circles:
\begin{itemize}
	\item We denote by $s=s_{i_1\dots i_s;j_1\dots j_r;l_1\dots l_t}$ the state that assigns a $B$-label to the crossings that correspond to the outer $A$-monochords $i_1, \dots, i_s$, the inner $A$-monochords $j_1, \dots, j_r$ and the $A$-bichords $l_1, \dots, l_t$, all seen in $s_AD$.
	When writing $s_{\,;\dots i\hat{j}k\dots;\,}$ we are assuming that the label associated to the crossing that corresponds to the $j$th inner monochord is still $A$.
	
	\item In $s_{\,;j_1,\dots,j_r;\,}D$ there are $r+1$ circles in a column, numbered $0, 1, \dots, r$. We will refer to circles $0$ and $r$ as the {\it extreme circles}; circles $1, \dots, r-1$ will be called {\it $H$-circles}. We will write
	$$
	s_{\,;j_1,\dots,j_r;\,}^{k+} \, \in C^{r, 2r-|s_AD|}(D), \quad k \in  \{0, r\},
	$$
	for the enhancement of $s_{\,;j_1,\dots,j_r;\,}$ that assigns $+$ to the extreme circle~$k$, and in addition to all the $H$-circles $1, \dots, r-1$ (Figure~\ref{FigureNotacionX}, first two pictures).
	
	In addition, we have now  external circles $C_\alpha$, indexed say by a set $\Lambda$ with $\Lambda \cap \{0,1,\dots, r\} = \emptyset$. Note that $|\Lambda|+1=|s_AD|$. We will write
	$$
	s_{\,;j_1,\dots,j_r;\,}^{\alpha+} \, \in C^{r, 2r-|s_AD|}(D), \quad \alpha \in \Lambda,
	$$
	for the enhancement of $s_{\,;j_1,\dots,j_r;\,}$ that assigns $+$ to the external circle~$C_\alpha$, and in addition to all the $H$-circles $1, \dots, r-1$ (Figure~\ref{FigureNotacionX}, last three pictures). 
	
	In both cases, $s_{\,;j_1,\dots,j_r;\,}^{k+}$ with $k \in  \{0, r\}$ or $s_{\,;j_1,\dots,j_r;\,}^{\alpha+}$ with $\alpha \in \Lambda$, we have
	$$
	\theta = (r-1 + 1)-( 1+ |\Lambda|) = r-|s_AD|, 
	\quad 
	j = i+\theta = 2r-|s_AD|.
	$$ 
		
	\begin{figure}[ht!]
		\labellist
		\endlabellist
		\begin{center}
			\includegraphics[scale=0.7]{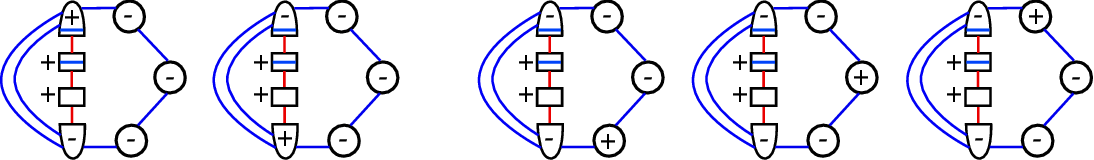}
		\end{center}
		\caption{Enhanced states $s_{\,;245;\,}^{0+}$, $s_{\,;245;\,}^{r+}$ and $s_{\,;245;\,}^{\alpha+}$ for $\alpha = 4,5,6$, when $g=2$, $h=5$ and $r=3$}
		\label{FigureNotacionX}
	\end{figure}
	
	\item We will write 
	$$
	s_{i; j_1\dots j_r;\,}^{0-} \, \in C^{r+1,2r-|s_AD|}(D)
	$$
	for the enhancement of $s_{i; j_1\dots j_r;\,}$ with sign $-$ in all external circles, and in the circle $0$ obtained by merging the extreme circles in $s_{\,;j_1,\dots,j_r;\,}$. Hence
	$$
	\theta = (r-1)-(1+|\Lambda|) = r-1-|s_AD|, \quad 
	j = i+\theta = (r+1) + \theta = 2r-|s_AD|.
	$$
	\begin{figure}[ht!]
		\labellist
		\endlabellist
		\begin{center}
			\includegraphics[scale=0.3]{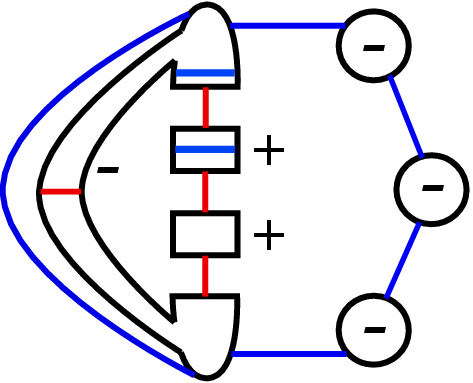}
		\end{center}
		\caption{Enhanced state $s_{2;245;\,}^{0-}$ when $g=2$, $h=5$ and $r=3$}
		\label{FigureNotacionV}
	\end{figure}
\end{itemize}

We are now ready for stating and proving the main result of this paper:

\begin{corollary} \label{MainCorollary}
	Let $D$ be a diagram of type $D(g,h)$ and assume that the graph defined by $s_AD$ without the monochords, is bipartite. Suppose that $r$ is odd and $r<h$. Let us consider the chains
	$$
	X= \sum_{1\leq j_1<\dots < j_r\leq h}
	\left( s_{\,;j_1\dots j_r;\,}^{0+} + s_{\,;j_1\dots j_r;\,}^{r+} + \sum_{\alpha \in \Lambda} (-1)^{l_\alpha} s_{\,;j_1\dots j_r;\,}^{\alpha+} \right)
	\, \in C^{r,2r-|s_AD|}(D)
	$$
	and
	$$
	V =
	\sum_{1\leq j_1<\dots < j_r\leq h} 
	\, \sum_{i=1}^g s_{i;j_1\dots j_r;\,}^{0-} 
	\, \in C^{r+1,2r-|s_AD|}(D),
	$$
	where, for each $\alpha \in \Lambda$, the integer $l_\alpha$ is the length of any path from the external circle $C_\alpha$ to the main circle in the graph defined by $s_AD$ without the monochords.
		 
	Then $d(X)=2V$ and $V$ defines a torsion element of order two in the Khovanov homology of $D$.
\end{corollary}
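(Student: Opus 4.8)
The plan is to reproduce, in the bipartite setting, the two-step scheme behind Theorem~\ref{BasicTheorem}: first prove the identity $d(X)=2V$ by a direct computation that generalizes Proposition~\ref{proposition:dX=2V}, and then deduce that $[V]$ has order two by checking that $2V$ is exact, that $V$ is a cycle, and that $V$ is itself not exact.

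For the identity I would decompose the differential as $d=\sum_{i} d_{x_i}+\sum_{j}d_{y_j}+\sum_{c}d_{c}$, where $x_i$ runs over the outer monochords, $y_j$ over the inner monochords, and $c$ over the bichords of $s_AD$. The term $2V$ comes entirely from the outer monochords, exactly as in Proposition~\ref{proposition:dX=2V}: applying $d_{x_i}$ to $s_{\,;j_1\dots j_r;}^{0+}$ or to $s_{\,;j_1\dots j_r;}^{r+}$ merges the two extreme circles $0$ and $r$, which carry opposite signs, so both give $s_{i;j_1\dots j_r;}^{0-}$ by the rule $(+,-)\mapsto-$ (resp. $(-,+)\mapsto-$), whereas $d_{x_i}$ kills every $s_{\,;j_1\dots j_r;}^{\alpha+}$ because there the two extreme circles both carry sign $-$ and $(-,-)$ merges to $0$. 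Summing over $i$ and over the index sets $\{j_1<\dots<j_r\}$ yields $2V$. The inner monochords contribute nothing: since $d_{y_j}$ never touches an external circle, each external circle is a mere spectator and the whole contribution splits into two column computations of the type carried out in Proposition~\ref{proposition:dX=2V}. The part coming from $s^{0+}+s^{r+}$ is, after identifying the column enhancements, the proposition's chain $W$, which vanishes because $r$ is odd; the part coming from each $s^{\alpha+}$ has the column enhancement with both extremes negative and all $H$-circles positive, and the same incidence-number pairing again forces its vanishing for odd $r$.

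The genuinely new ingredient is the bichord term $\sum_c d_c(X)$, and this is where the bipartite hypothesis and the signs $(-1)^{l_\alpha}$ are used. Since no bichord is attached to the main circle between two consecutive inner monochords, after smoothing each bichord $c$ joins an external circle to an extreme circle $0,r$ or to another external circle, so in the graph $s_AD$ minus the monochords its two endpoints are adjacent and therefore lie in different colour classes. For a fixed $c$ the only summands of $X$ on which $d_c$ is nonzero are the (at most) two whose unique positive circle is an endpoint of $c$ — every other summand merges two negative circles and dies — and these two produce the \emph{same} target state with a common incidence sign, since they share the same underlying Kauffman state. Their coefficients in $X$ are the signs carried by the two endpoint circles, namely $+1=(-1)^{0}$ for an extreme circle (with the convention that the main circle is at distance $0$ from itself) and $(-1)^{l_\alpha}$ for $C_\alpha$; adjacency of the endpoints makes these two signs opposite, so the two contributions cancel. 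Hence $\sum_c d_c(X)=0$ and $d(X)=2V$.

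Once $d(X)=2V$ is established, $2V$ is exact, and from $0=d^2(X)=2\,d(V)$ together with the freeness of the chain group it follows that $d(V)=0$, so $V$ is a cycle. To finish I would show that $V$ is not exact by adapting the augmentation argument of Theorem~\ref{BasicTheorem}: take the $R$-submodule ${\cal B}$ generated by the bipartite analogues of the generators used there (the same column states, now carrying sign $-$ on every external circle), chosen so that $\pi_{\cal B}(V)=s_{1;2\dots r+1;}^{0-}$ and hence $\varepsilon\circ\pi_{\cal B}(V)=1$, and then verify that $\varepsilon\circ\pi_{\cal B}\circ d$ maps every enhanced state into $2R$, which contradicts $V=d(Y)$. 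I expect this last verification to be the main obstacle: by contrast with Theorem~\ref{BasicTheorem}, the differential of a candidate primitive now also carries bichord terms, and one must check that their projections onto ${\cal B}$ again occur in cancelling pairs, by exactly the colour-parity mechanism used for $\sum_c d_c(X)$, so that together with the even monochord counts inherited from Theorem~\ref{BasicTheorem} the total always lands in $2R$.
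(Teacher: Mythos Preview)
Your proposal is correct and follows the same two-step scheme as the paper. Two remarks on where you diverge slightly.

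For the identity $d(X)=2V$, your treatment of the bichord contributions is in fact more economical than the paper's. The paper partitions the bichords into three classes $\text{TOP}$, $\text{BOT}$, $E\setminus(\text{TOP}\cup\text{BOT})$ (according to whether an endpoint lies on the extreme circle $0$, on the extreme circle $r$, or on neither) and produces three separate pairwise cancellations: the $\text{TOP}$ part of $d_c(s^{0+})$ against $\sum_\alpha(-1)^{l_\alpha}d_c(s^{\alpha+})$ over $c\in\text{TOP}$, the analogous $\text{BOT}$ cancellation, and an internal cancellation among the $s^{\alpha+}$ for $c\in E\setminus(\text{TOP}\cup\text{BOT})$. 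Your uniform argument --- for each bichord $c$ exactly two summands of $X$ survive $d_c$, they share incidence sign, and their $X$-coefficients $(-1)^{l_{\cdot}}$ are opposite by adjacency in a bipartite graph --- subsumes all three cases at once. Both approaches rely on the hypothesis that no bichord lands between two consecutive inner monochords, so the endpoints of any bichord lie among the extreme and external circles.

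For non-exactness, the obstacle you anticipate does not materialize, and the paper's argument is shorter than you expect. The key observation is that every generator of ${\cal B}$ has all bichords carrying an $A$-label. Hence if $i(Y,t)\neq 0$ for some $t\in B$, the change crossing $x(Y,t)$ (at which $Y$ has label $A$ and $t$ has label $B$) cannot be a bichord; it must be a monochord. Consequently every primitive $Y$ has all bichords blue and all external circles labelled $-$ (these are common circles between $Y$ and $t$), so the list of primitives is exactly the list from Theorem~\ref{BasicTheorem} with spectator external circles appended, and the mod-$2$ counts carry over verbatim --- including the $r=1$ modification via $B_1$. No colour-parity cancellation among bichord terms is needed: the projection $\pi_{\cal B}$ annihilates them outright.
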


Notice that, when $D$ is just a mono-circular diagram of type $D(g,h)$, the chains defined in Corollary~\ref{MainCorollary} coincide with those defined in Proposition~\ref{proposition:dX=2V}. 

\begin{proof}
	
	As in the proof of Proposition~\ref{proposition:dX=2V} and Theorem~\ref{BasicTheorem}, we will show that $d(X)=2V$ and that $X$ is not exact, hence $V$ defines an element of torsion, of order two, in $\underline{Kh}^{r+1,2r-|s_AD|}(D)$. 
		
	Let $\text{Cro}(D) = \{ x_1, \dots, x_g, y_1, \dots, y_h, z_1, \dots,z_{|I|}\}$ be the set of crossings of $D$, where $x_i$ corresponds to the outer $i$th $A$-monochord, $y_j$ to the inner $j$th $A$-monochord and $z_l$ to the $l$th bichord. Recall that the set $\Lambda$ indexes the external circles. To shorten notation, from now on we will write $J$ instead of $1\leq j_1 < \dots <j_r \leq h$.
	
	Let $E=\{z_1, \dots, z_{|I|}\}$. We define subsets $\text{TOP}, \text{BOT} \subset E$ as follows: a crossing $z\in E$ belongs to $\text{TOP}$ (resp. $\text{BOT}$) if the corresponding bichord has one of its ends in the main circle, above (resp. under) the parallel inner $A$-monochords. Note that, if $z\in (\text{TOP}\, \cup \, \text{BOT})$ has its other end in a circle $C_\alpha$, then $l_\alpha =1$.  
	 	
	We start by rearranging the chain $d(X)$ conveniently:
	$$
	\begin{array}{rclc}
		d(X)
		&=& \displaystyle \sum_{J}
		\sum_{x\in \{x_1, \dots , x_g,y_1, \dots , y_h\}} 
		\left( d_x(s^{0+}_{\,;j_1\dots j_r;\,}) + d_x(s^{r+}_{\,;j_1\dots j_r;\,})\right) & (1)\\
		&& \displaystyle + \sum_{J}
		\sum_{x\in \{z_1, \dots , z_{|I|}\}} 
		\left( d_x(s^{0+}_{\,;j_1\dots j_r;\,}) + d_x(s^{r+}_{\,;j_1\dots j_r;\,})\right) & (2)\\
		&& \displaystyle + \sum_{J}
		\sum_{\alpha \in \Lambda} (-1)^{l_\alpha} 
		\sum_{x\in \{x_1, \dots , x_g\}} d_x(s^{\alpha +}_{\,;j_1\dots j_r;\,}) & (3) \\
		&& \displaystyle + \sum_{J}
		\sum_{\alpha \in \Lambda} (-1)^{l_\alpha} 
		\sum_{x\in \{y_1, \dots , y_h\}} d_x(s^{\alpha +}_{\,;j_1\dots j_r;\,}) & (4)\\
		&& \displaystyle
		+ \sum_{J}
		\sum_{\alpha \in \Lambda} (-1)^{l_\alpha} 
		\sum_{x\in E\setminus (\text{TOP}\,\cup\,\text{BOT})} d_x(s^{\alpha +}_{\,;j_1\dots j_r;\,}) & (5)\\
		&& \displaystyle
		+ \sum_{J}
		\sum_{\alpha \in \Lambda} (-1)^{l_\alpha} 
		\sum_{x\in \text{TOP}} d_x(s^{\alpha +}_{\,;j_1\dots j_r;\,}) & (6) \\
		&& \displaystyle + \sum_{J}
		\sum_{\alpha \in \Lambda} (-1)^{l_\alpha} 
		\sum_{x\in \text{BOT}} d_x(s^{\alpha +}_{\,;j_1\dots j_r;\,}). & (7)
	\end{array}
	$$
	The sum $(3)$ is equal to zero: in fact, each $d_x(s^{\alpha +}_{\,; j_1\dots j_r;\,}) = 0$ since it would be a merging of two circles $-$.  
	
	The sum $(4)$ is also equal to zero: in this case, for each $\alpha \in \Lambda$ (the sign $+$ is fixed in the circle $C_\alpha$), we have that
	$$
	\sum_{J}
	\sum_{x\in \{y_1, \dots , y_h \}} d_x(s^{\alpha +}_{\,;j_1\dots j_r;\,}) = 0.
	$$
	In order to check this, note that, if $x\in \{y_1, \dots , y_h \}$, then in $d_x(s^{\alpha +}_{\,;j_1\dots j_r;\,})$ only appear elements of the form $s^{\alpha+\, 0k(r+1)-}_{\,;t_1\dots t_{r+1};\,}$ with $k=1, \dots, r$
	(see Figure~\ref{FigureAuxiliarPrueba}).
		\begin{figure}[ht!]
		\labellist
		\pinlabel{$-$} at 23 135
		\pinlabel{$-$} at 84 135
		\pinlabel{$-$} at 145 135
		\pinlabel{$-$} at 23 112
		\pinlabel{$t_k$} at 63 83		
		\pinlabel{$-$} at 145 33
		\pinlabel{$t_{k+1}$} at 59 63		
		\pinlabel{$-$} at 84 73
		\pinlabel{$-$} at 23 10
		\pinlabel{$-$} at 84 10
		\pinlabel{$-$} at 145 10
		\endlabellist
		\begin{center}
			\includegraphics[scale=1.2]{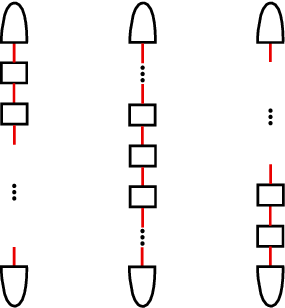}
		\end{center}
		\caption{Enhanced states $s^{\alpha+\, 0k(r+1)-}_{\,;t_1\dots t_{r+1};\,}$ for $k=1$ (left), $k\in\{1,\dots, r-1\}$ (central) and $k=r$ (right). Circles without sign are circles~$+$. Blue inner monochords have not been drawn although they are relevant. Outer monochords and external circles, not shown, are not relevant here.}
		\label{FigureAuxiliarPrueba}
	\end{figure}
	Each of these enhanced states appears twice and with opposite signs in the above sum when differentiating, via the crossings $y_j$, two enhanced states of the form $s^{\alpha +}_{\, ;j_1\dots j_r}$. Precisely,
	\begin{itemize}
	\item if $s=s^{\alpha+\, 01(r+1)-}_{\,;t_1\dots t_{r+1};\,}$, then (see Figure~\ref{FigureAuxiliarDos})
	$$
	d_{t_1}(s^{\alpha+}_{\,;t_2\dots t_{r+1};\,}) 
	= s
	\quad \text{ and } \quad
	d_{t_2}(s^{\alpha +}_{\, ; t_1t_3\dots t_{r+1};\,}) 
	= -s_{\, ; t_1t_2t_3\dots t_{r+1};\,}^{\alpha+\, 02(r+1)-} - s.
	$$
	\item If $s=s^{\alpha+ 0k(r+1)-}_{\,;t_1\dots t_{r+1};\,}$ with $k=2, \dots, r-1$, then (see Figure~\ref{FigureAuxiliarUnoDosAuxiliarUnoTres})
	$$
	d_{t_k}(s^{\alpha+}_{\,;t_1\dots \hat{t}_k \dots t_{r+1};\,})
	= (-1)^{k-1}s 
	+ (-1)^{k-1} s^{\alpha+\ 0(k-1)(r+1)-}_{\,;t_1\dots t_{r+1};\,}
	$$
	and
	$$
	d_{t_{k+1}}(s^{\alpha+}_{\,;t_1\dots \hat{t}_{k+1}\dots t_{r+1};\,})
	= (-1)^{k} s^{\alpha+\, 0(k+1)(r+1)-}_{\,;t_1\dots t_{r+1};\,} + (-1)^k s.
	$$
	\item And if $s=s^{\alpha+\, 0r(r+1)-}_{\,;t_1\dots t_{r+1};\,}$, then
	$$
	d_{t_r}(s^{\alpha +}_{\,;t_1\dots \hat{t}_r t_{r+1};\,}) 
	= s+s_{\,;t_1\dots t_rt_{r+1};\,}^{\alpha+\, 0(r-1)(r+1)-}
	\quad \text{ and } \quad 
	d_{t_{r+1}}(s^{\alpha+}_{\,;t_2\dots t_r};\,) 
	= -s.	
	$$
	\end{itemize}
	
	\begin{figure}[ht!]
		\labellist
		\pinlabel {$t_1$} at -10 317
		\pinlabel {$t_1$} at 125 317
		\pinlabel {$t_1$} at 290 317
		\pinlabel {$t_1$} at 445 317
		\pinlabel {$t_1$} at 530 317
		\pinlabel {$t_2$} at 0 263
		\pinlabel {$t_2$} at 125 263
		\pinlabel {$t_2$} at 280 263
		\pinlabel {$t_2$} at 445 263
		\pinlabel {$t_2$} at 528 263
		\pinlabel {$t_3$} at 290 210
		\pinlabel {$t_3$} at 445 210
		\pinlabel {$t_3$} at 528 210
		\pinlabel{$-$} at 410 263
		\pinlabel{$-$} at 495 263
		\pinlabel{$\mapsto$} at 70 315
		\pinlabel{$\mapsto$} at 370 260
		\pinlabel{$-$} at 35 350
		\pinlabel{$-$} at 35 10
		\pinlabel{$-$} at 160 350
		\pinlabel{$-$} at 160 10
		\pinlabel{$-$} at 160 290
		\pinlabel{$-$} at 325 350
		\pinlabel{$-$} at 325 10
		\pinlabel{$-$} at 480 350
		\pinlabel{$-$} at 480 10
		\pinlabel{$-$} at 480 240
		\pinlabel{$-$} at 565 350
		\pinlabel{$-$} at 565 10
		\pinlabel{$-$} at 565 290
		\endlabellist
		\begin{center}
			\includegraphics[scale=0.4]{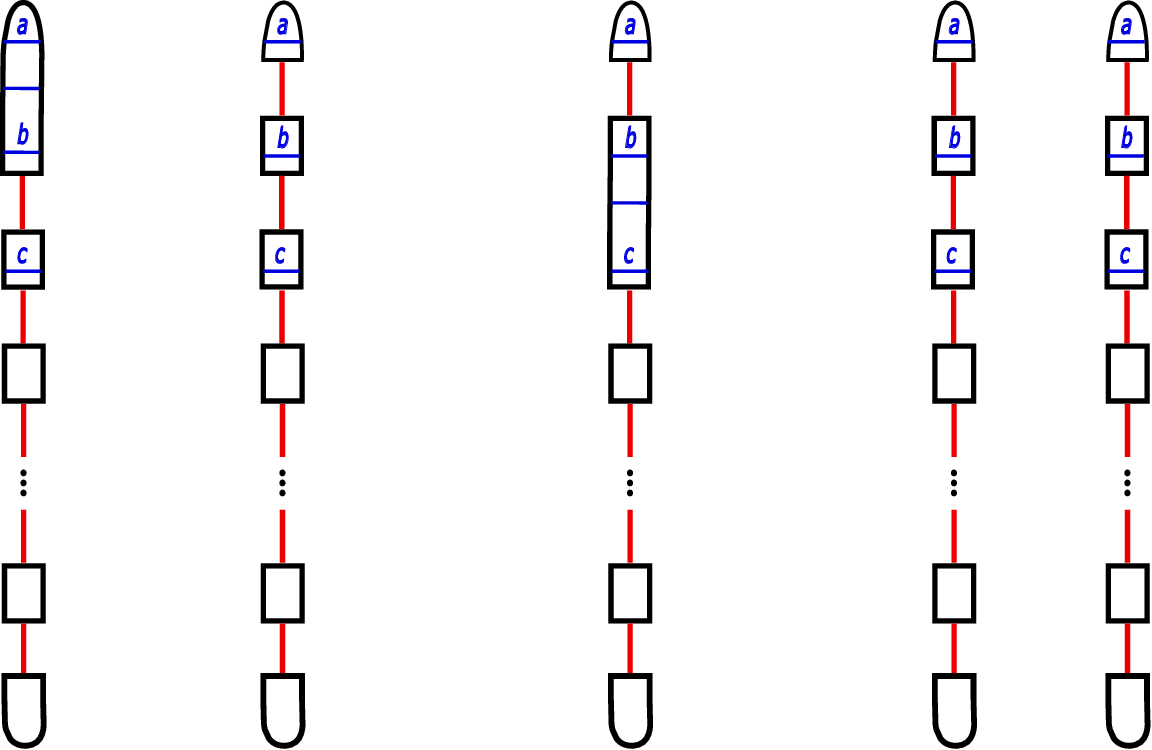}
		\end{center}
		\caption{$d_{t_1}(s^{\alpha+}_{\,;t_2\dots t_{r+1};\,})\!=\!s=s^{\alpha+\, 01(r+1)-}_{\,;t_1\dots t_{r+1};\,}$, $d_{t_2}(s^{\alpha +}_{\,;t_1t_3\dots t_{r+1};\,})\!=\!-s_{\,;t_1t_2\dots t_{r+1};\,}^{\alpha+\,02(r+1)-}\!-\!s$}
		\label{FigureAuxiliarDos}
	\end{figure}
				
	\begin{figure}[ht!]
	\labellist
	\pinlabel {$t_{k-1}$} at -20 350
	\pinlabel {$\hat{t}_k$} at -20 277
	\pinlabel {$t_{k+1}$} at -20 202
	\pinlabel {$t_{k-1}$} at 180 350
	\pinlabel {${t}_k$} at 195 277
	\pinlabel {$t_{k+1}$} at 180 202
	\pinlabel {$t_{k-1}$} at 325 350
	\pinlabel {${t}_k$} at 340 277
	\pinlabel {$t_{k+1}$} at 325 202
	\pinlabel{$\mapsto$} at 90 450 
	\pinlabel{$\epsilon$} at 150 450 
	\pinlabel{$+\,\epsilon$} at 290 450 
	\pinlabel{$-$} at 55 530
	\pinlabel{$-$} at 55 10
	\pinlabel{$-$} at 250 530
	\pinlabel{$-$} at 250 10
	\pinlabel{$-$} at 250 255
	\pinlabel{$-$} at 395 530
	\pinlabel{$-$} at 395 10
	\pinlabel{$-$} at 395 325

	\pinlabel {$t_{k-1}$} at 570 350
	\pinlabel {$t_k$} at 580 275
	\pinlabel {$\hat{t}_{k+1}$} at 550 203
	\pinlabel {$t_{k-1}$} at 755 350
	\pinlabel {$t_k$} at 770 275
	\pinlabel {$t_{k+1}$} at 765 203
	\pinlabel {$t_{k-1}$} at 920 350
	\pinlabel {$t_k$} at 940 275
	\pinlabel {$t_{k+1}$} at 920 203
	\pinlabel{$\mapsto$} at 670 450 
	\pinlabel{$-\epsilon$} at 720 450 
	\pinlabel{$-\,\epsilon$} at 870 450 
	\pinlabel{$-$} at 640 530
	\pinlabel{$-$} at 640 10
	\pinlabel{$-$} at 830 530
	\pinlabel{$-$} at 830 10
	\pinlabel{$-$} at 830 180
	\pinlabel{$-$} at 995 530
	\pinlabel{$-$} at 995 10
	\pinlabel{$-$} at 995 255
	\endlabellist
	\begin{center}
		\includegraphics[scale=0.3]{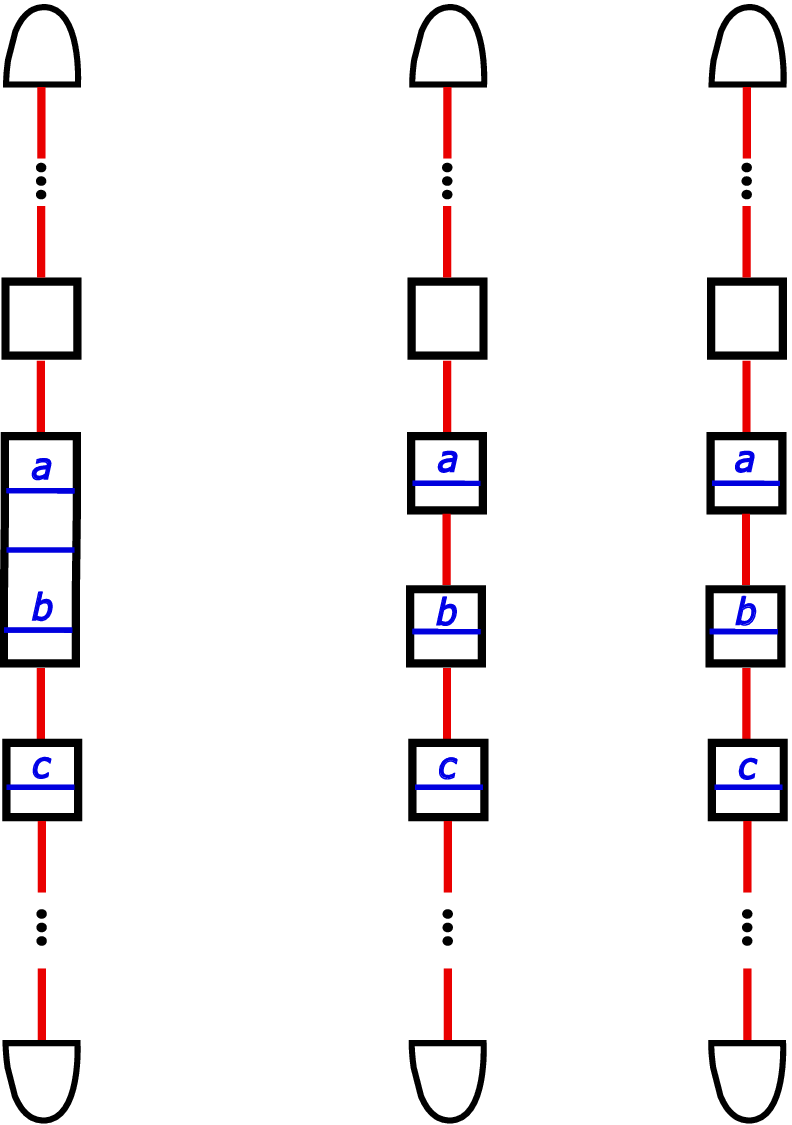}
		\qquad\qquad \qquad
		\includegraphics[scale=0.3]{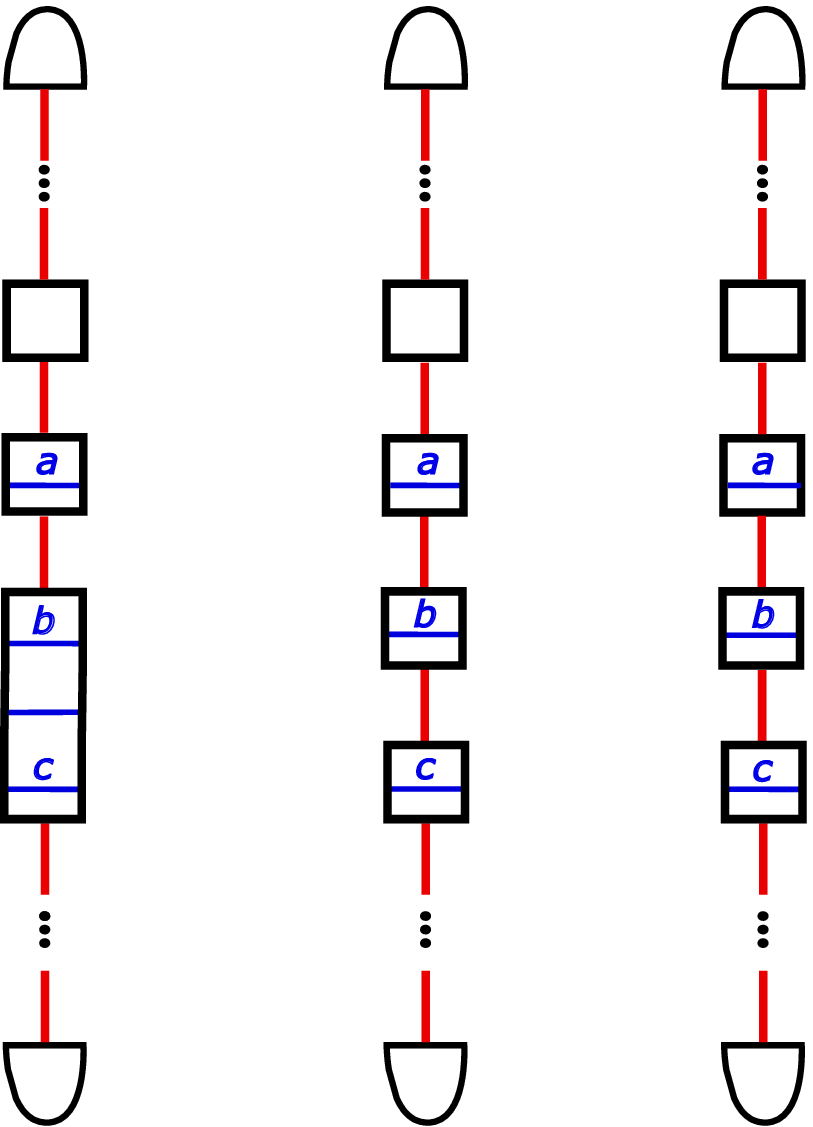}
	\end{center}
	\caption{$d_{t_k}(s^{\alpha+}_{\,;t_1\dots \hat{t}_k \dots t_{r+1};\,})
		= (-1)^{k-1}s 
		+ (-1)^{k-1} s^{\alpha+\ 0(k-1)(r+1)-}_{\,;t_1\dots t_{r+1};\,}$ and $d_{t_{k+1}}(s^{\alpha+}_{\,;t_1\dots \hat{t}_{k+1}\dots t_{r+1};\,})
		= (-1)^{k} s^{\alpha+\, 0(k+1)(r+1)-}_{\,;t_1\dots t_{r+1};\,} + (-1)^k s$ where $s=s^{\alpha+ 0k(r+1)-}_{\,;t_1\dots t_{r+1};\,}$}
	\label{FigureAuxiliarUnoDosAuxiliarUnoTres}
	\end{figure}

	The sum $(5)$ is equal to zero: now, fixed a particular sequence $J=(1\leq j_1<\dots < j_r \leq h)$, we have that
	$$
	\sum_{\alpha \in \Lambda} (-1)^{l_\alpha} 
	\sum_{x\in E\setminus (\text{TOP}\,\cup\,\text{BOT})} 
	d_x(s^{\alpha +}_{\, ;j_1\dots j_r;\,})
	= 0.
	$$
	This is true since there is a cancellation by pairs. To see this, suppose that the bichord corresponding to a crossing $x\in E\setminus (\text{TOP}\,\cup\,\text{BOT})$ joins circles~$C_\alpha$ and~$C_\beta$.  We have the following possibilities: when both circles have sign $-$, then $d_x$ gives zero. If $C_\alpha$ or $C_\beta$ is $+$, both circles merge into a single one, with sign $-$; if $C_\alpha$ is $+$, we obtain the coefficient $(-1)^{l_\alpha}$ and if $C_\beta$ is $+$ we obtain~$(-1)^{l_\beta}$. Since $(-1)^{l_\beta}= -(-1)^{l_\alpha}$, they cancel.
	
	We now pay attention to sum (2), and observe that it is equal to
	$$
	\displaystyle 
	\sum_{J}
	\sum_{x\in \text{TOP}} 
	d_x(s^{0+}_{\,;j_1\dots j_r;\,}) 
	+ 
	\sum_{J}
	\sum_{x\in \text{BOT}} 
	d_x(s^{r+}_{\, ;j_1\dots j_r;\,}),
	$$
	since if $x\in E\setminus (\text{TOP} \cup \text{BOT})$, then the corresponding bichord is joining two circles with signs $-$ in the enhanced state $s^{0+}_{\,;j_1\dots j_r;\,}$ (and also in $s^{r+}_{\,;j_1\dots j_r;\,}$), and so $d_x$ gives zero.
	
	Then, it turns out that the first summand in the expression above cancels with the sum (6). Precisely,
	for a fixed sequence $J=(1\leq j_1 < \dots < j_r \leq h)$ it turns out that 
	$$
	\sum_{x\in \text{TOP}} 
	d_x(s^{0+}_{\,;j_1\dots j_r;\,}) 
	+ 
	\sum_{\alpha \in \Lambda} (-1)^{l_\alpha} 
	\sum_{x\in \text{TOP}} d_x(s^{\alpha +}_{\, ;j_1\dots j_r;\,}) 
	= 0.
	$$ 
	Indeed, if $x\in \text{TOP}$, then $d_x(s^{\alpha +}_{\, ;j_1\dots j_r;\,})=0$ except if the circle $C_\alpha$ with sign $+$ is joined to the circle $0$ by the bichord associated to the crossing $x$, and in this case $d_x(s^{\alpha +}_{\, ;j_1\dots j_r;\,})$ produces a merged circle with sign $-$, and coefficient $(-1)^{l_\alpha} = (-1)^1=-1$, which cancels with the $d_x(s^{0+}_{\,;j_1\dots j_r;\,})$ in the first sum. 
	
	Analogously, the second summand cancels with the sum in (7), that is,
	$$
	\sum_{J}
	\sum_{x\in \text{BOT}} 
	d_x(s^{r+}_{\,;j_1\dots j_r;\,}) 
	+
	\sum_{J}
	\sum_{\alpha \in \Lambda} (-1)^{l_\alpha} 
	\sum_{x\in \text{BOT}} d_x(s^{\alpha +}_{\, ;j_1\dots j_r;}) 
	= 0.
	$$
	Hence we have discovered that only the summand (1) does not vanish, and it is
	$$
	d(X) 
	= \displaystyle \sum_{J}
	\sum_{x_1\in \{x_, \dots , x_g,y_1, \dots , y_h\}} 
	\left( d_x(s^{0+}_{\,;j_1\dots j_r}) + d_x(s^{r+}_{\, ;j_1\dots j_r})\right)
	= 2V
	$$
	by an argument similar to the proof of Proposition~\ref{proposition:dX=2V}.
	This proves that $d(X)=2V$.
	
	In order to complete the proof, we need to check that $2V$ is not an exact element. The proof is partially similar to that of Theorem~\ref{BasicTheorem}. In any case, we write the complete details. 
	
	We consider the submodule ${\cal B}$ of $C(D)$ generated by the elements (see Figure~\ref{GeneratorsBWithExternalCircles})
	$$
	B = \{ s^{0k-}_{\,;1, \dots, r+1;\,} \,/\, k=1, \dots, r+1\} 
	\cup 
	\{ s^{k-}_{1;2,\dots, r+1;\,} \,/\, k=0, \dots, r-1\}.
	$$
	\begin{figure}[ht!]
		\labellist
		\endlabellist
		\begin{center}
			\includegraphics[scale=0.25]{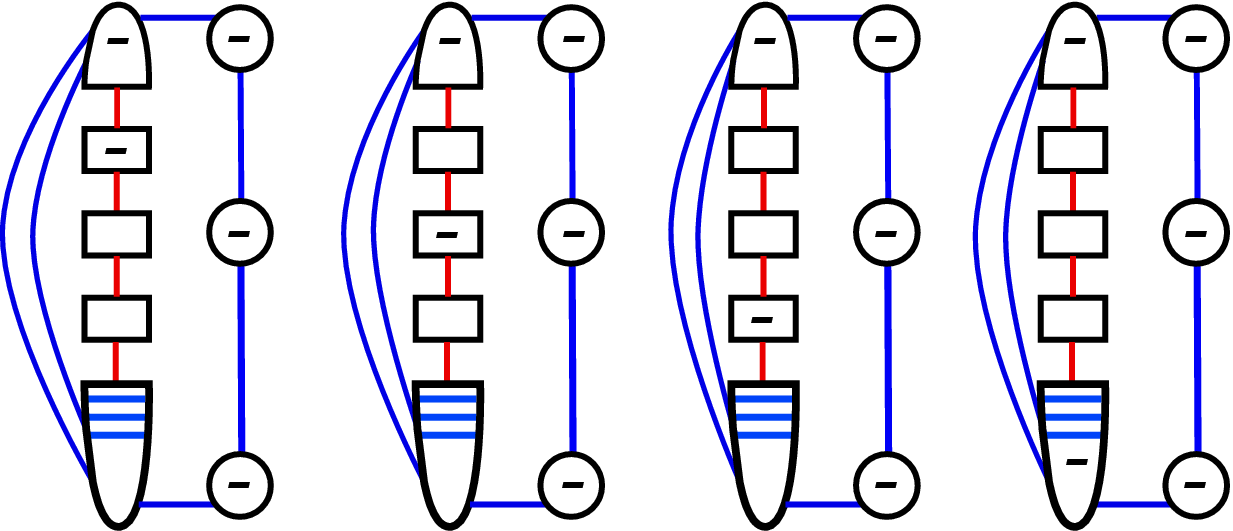}
			\qquad\qquad
			\includegraphics[scale=0.25]{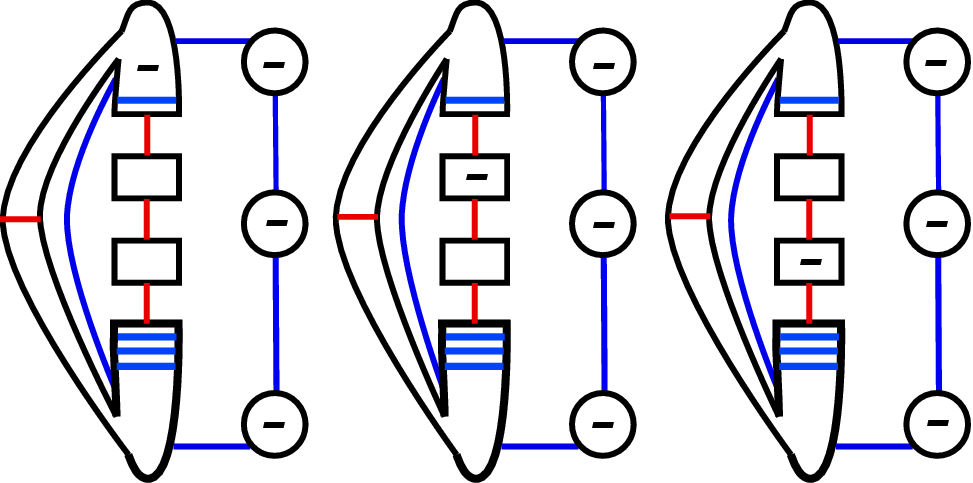}
		\end{center}
		\caption{Enhanced states of $B$, for $g\!=\!2$, $h\!=\!7$, $r\!=\!3$ and three external circles}
		\label{GeneratorsBWithExternalCircles}
	\end{figure}
	
	Consider the projection $\pi_{\cal B}:C(D)\rightarrow {\cal B}$. As in the proof of Theorem~\ref{BasicTheorem}, the map $\varepsilon \circ \pi_{\cal B}\circ d:C(D)\rightarrow R$ will be useful to prove that $V$ is not exact. Note first that, since $\pi_{\cal B}(V) = s^{0-}_{1;2, \dots, r+1}$, it follows that $\varepsilon(\pi_{\cal B}(V))=1$. We will then deduce that $V$ is not exact by just proving that the composition
	$$
	C(D) \stackrel{d}{\longrightarrow} C(D) \stackrel{\pi_{\cal B}}{\longrightarrow} {\cal B} \stackrel{\epsilon}{\longrightarrow} \mathbb{Z}
	$$
	has its image contained in $2\mathbb{Z}$, being enough to check that $\epsilon(\pi_{\cal B}(d(Y)) \in 2\mathbb{Z}$ for any enhanced state $Y$.
	
	Consider the matrix of the map $d:C^{r,2r-|s_AD|}(D) \rightarrow C^{r+1,2r-|s_AD|}(D)$, whose elements are $0$ or $\pm 1$. We have to check that, for each column, the number of non-zero entries in the rows that correspond to the elements of $B$ 
	is even. 
		
	A simpler idea is start with an element of $B$, integrate it, and consider the columns corresponding to any of the enhanced states that appear. 
	Since the states in $B$ do not have red bichords, these elements are the same as in the proof of Theorem~\ref{BasicTheorem}, Figure~\ref{GeneratorsBPrimitivas}, but with extra external circles, all of them with signs $-$. The proof is then completed by following the same argument there.
\end{proof}	

\begin{example} Figures~\ref{FigureXelementFORgDOShDOSrUNOCirculosExternosTRES} and \ref{FigureVelementFORgDOShDOSrUNOCirculosExternosTRES} show respectively the complete pictorial descriptions of $X$ and $V$ when $g=h=2$, $r=1$ and there are three external circles. 
	\begin{figure}[ht!] 
		\labellist
		\pinlabel {$+$} at 22 112.5
		\pinlabel {$s_{\,;1;\,}^{0}$} at 5 78
		\pinlabel {$+$} at 130 88
		\pinlabel {$s_{\,;1;\,}^{r}$} at 110 78
		\pinlabel {$+$} at 264 80
		\pinlabel {$s_{\,;1;\,}^{2}$} at 218 78
		\pinlabel {$+$} at 393.5 96.5
		\pinlabel {$s_{\,;1;\,}^{3}$} at 325 78
		\pinlabel {$+$} at 481 114
		\pinlabel {$s_{\,;1;\,}^{4}$} at 433 78
		\pinlabel {$+$} at 22 35.5
		\pinlabel {$s_{\,;2;\,}^{0}$} at 5 0
		\pinlabel {$+$} at 130 11
		\pinlabel {$s_{\,;2;\,}^{r}$} at 110 0
		\pinlabel {$+$} at 264 9.5
		\pinlabel {$s_{\,;2;\,}^{2}$} at 218 0
		\pinlabel {$+$} at 393.5 27
		\pinlabel {$s_{\,;2;\,}^{3}$} at 325 0
		\pinlabel {$+$} at 481 43.5
		\pinlabel {$s_{\,;2;\,}^{4}$} at 433 0
		\endlabellist
		\begin{center}
			\includegraphics[scale=0.7]{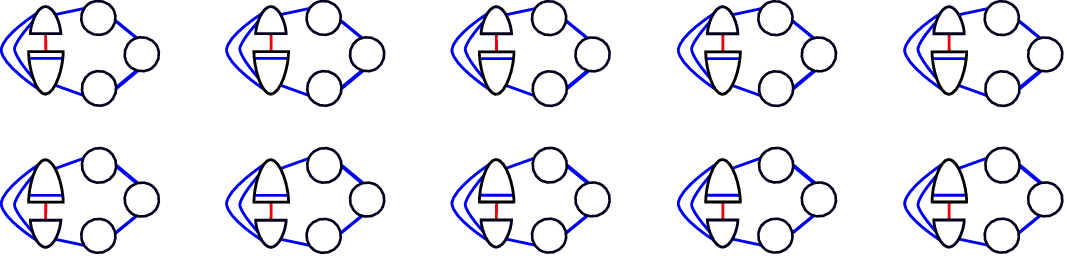}
		\end{center}
		\caption{Element $X = s_{\,;1;\,}^{0} + s_{\,;1;\,}^{r} - s_{\,;1;\,}^{2} + s_{\,;1;\,}^{3} - s_{\,;1;\,}^{4} + s_{\,;2;\,}^{0} + s_{\,;2;\,}^{r} - s_{\,;2;\,}^{2} + s_{\,;2;\,}^{3} - s_{\,;2;\,}^{4}$ for $g=h=2$, $r=1$ and three external circles}
		\label{FigureXelementFORgDOShDOSrUNOCirculosExternosTRES}
	\end{figure}
	
	\begin{figure}[ht!] 
		\labellist
		\pinlabel {$s_{1;1;\,}^{0-}$} at 5 0
		\pinlabel {$s_{1;2;\,}^{0-}$} at 110 0
		\pinlabel {$s_{2;1;\,}^{0-}$} at 225 0
		\pinlabel {$s_{2;2;\,}^{0-}$} at 327 0
		\endlabellist
		\begin{center}
			\includegraphics[scale=0.7]{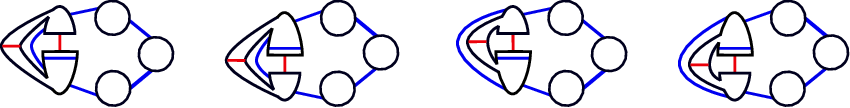}
		\end{center}
		\caption{Element $V = s_{1;1;\,}^{0-} + s_{1;2;\,}^{0-} + s_{2;1;\,}^{0-} + s_{2;2;\,}^{0-} $ for $g=h=2$, $r=1$ and three external circles. All signs are $-$ since there is no $H$-circles because $r=1$}
		\label{FigureVelementFORgDOShDOSrUNOCirculosExternosTRES}
	\end{figure}
\end{example}

\begin{example}
	Corollary~\ref{MainCorollary} detects torsion of order two in the Whitehead link. Figure~\ref{FigureWhitehead} shows a diagram $D$ of this link that satisfies the hypothesis of Corollary~\ref{MainCorollary}, as its corresponding $A$-smoothing allows to see. 
	\begin{figure}[ht!] 
		\labellist
		\pinlabel {$D$} at 0 20
		\pinlabel {$s_AD$} at 330 20
		\endlabellist
		\begin{center}
			\includegraphics[scale=0.3]{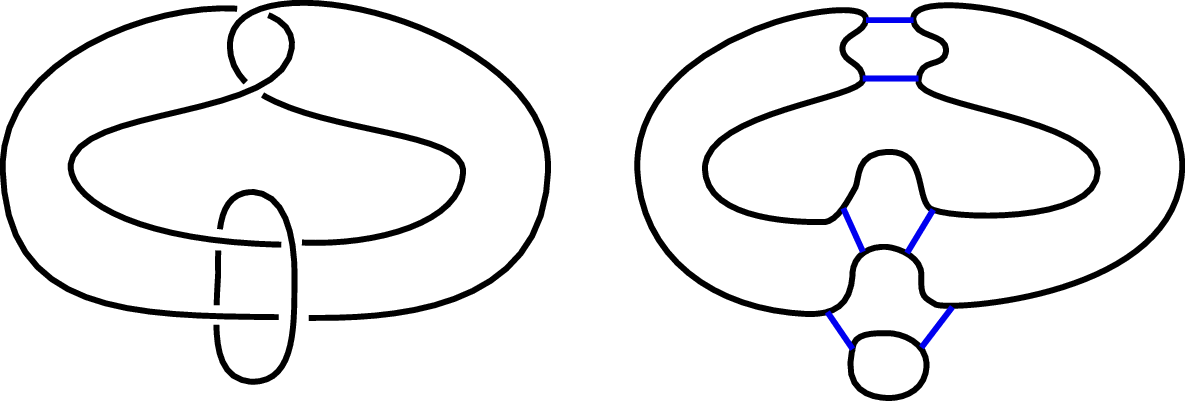}
		\end{center}
		\caption{A diagram $D$ of the Whitehead link, and the corresponding~$s_AD$}
		\label{FigureWhitehead}
	\end{figure}
\end{example}

\begin{example}
	Corollary~\ref{MainCorollary} detects torsion of order two in the  Borromean rings, the link shown on the left in Figure~\ref{FigureBorromeanRings}. In the same figure  another diagram $D$ of this link is shown, which satisfies the hypothesis of Corollary~\ref{MainCorollary}, as its corresponding $A$-smoothing allows to see. 
	\begin{figure}[ht!] 
		\labellist
		\pinlabel {$D$} at 70 5
		\pinlabel {$s_AD$} at 150 5
		\endlabellist
		\begin{center}
			\includegraphics[scale=1.3]{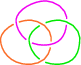}
			\qquad \qquad
			\includegraphics[scale=0.8]{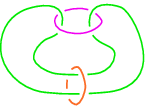}
			\qquad \qquad
			\includegraphics[scale=0.7]{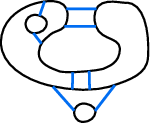}
		\end{center}
		\caption{Borromean rings, a {\it good} diagram $D$ for it and the corresponding~$s_AD$}
		\label{FigureBorromeanRings}
	\end{figure}
\end{example}

\begin{example} \label{ExampleNonAlternating}
	We now show a non-alternating knot that has torsion because of our torsion pattern (note that the construction we do can easily produce many more different examples). We start with a diagram $D_0$ of the non-alternating knot $8_{19}$ and observe that $s_AD_0$ has a bipartite graph without monochords (see Figure~\ref{FigureOchoDiecinueve}).
	\begin{figure}[ht!]
		\labellist
		\endlabellist
		\begin{center}
			\includegraphics[scale=0.3]{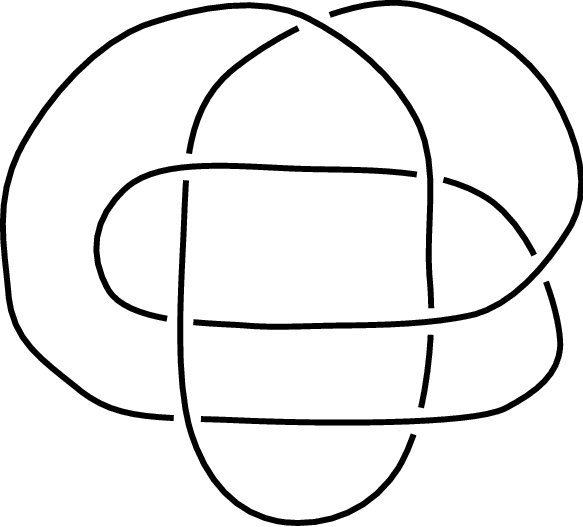}
			\qquad	
			\includegraphics[scale=0.3]{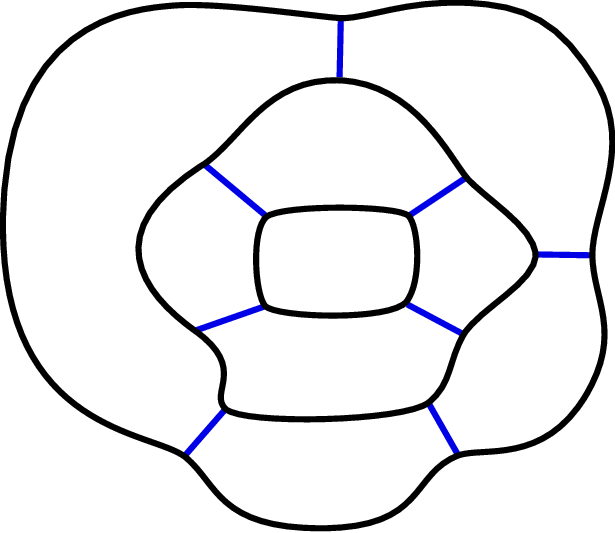}
		\end{center}
		\caption{Diagram $D_0$ of $8_{19}$ and $s_AD_0$.}
		\label{FigureOchoDiecinueve}
	\end{figure}
	
	We then insert $g+h$ monochords into a circle of $s_AD_0$, obtaining the $A$-smoothing $s_AD$ of a diagram $D$ (see Figure~\ref{FigureOchoDiecinueveConPegote}). The diagram $D$ satisfies the hypothesis of Corollary~\ref{MainCorollary}: it is a diagram of type $D(g,h)$ and the graph defined by $s_AD$ without the monochords is bipartite (since $s_AD_0$ was). Note that $D$ represents a knot (a link with one component) if $g$ and $h$ are both even.
	\begin{figure}[ht!]
		\labellist
		 \pinlabel {$g$} at 65 141
		 \pinlabel {$\dots$} at 65 129
		 \pinlabel {$h$} at 163 138
		 \pinlabel {$\vdots$} at 150 145
	     \pinlabel {$g$} at 470 150
		 \pinlabel {$\dots$} at 470 138
		 \pinlabel {$h$} at 547 127
		 \pinlabel {$\vdots$} at 537 132		
		 \pinlabel {$s_AD$} at 50 20
		 \pinlabel {$D$} at 390 20
		\endlabellist
		\begin{center}
			\includegraphics[scale=0.5]{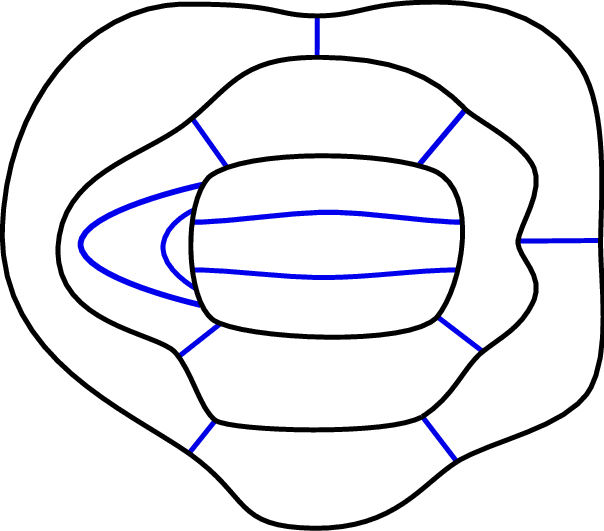}
			\qquad
			\includegraphics[scale=0.5]{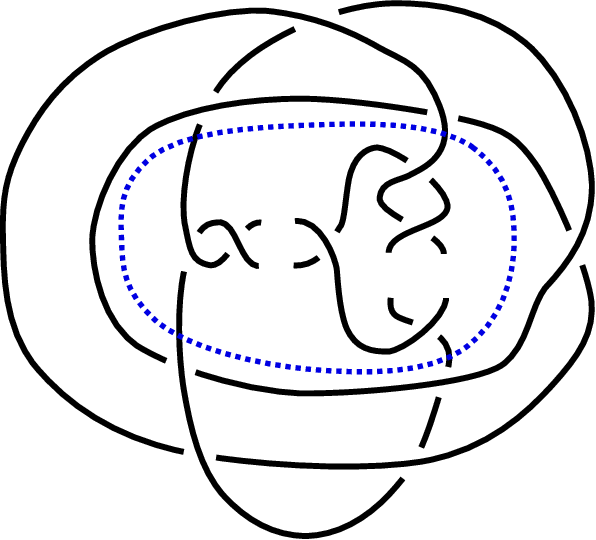}
		\end{center}
		\caption{From $s_AD_0$ we construct the convenient $s_AD$. On the right, the corresponding diagram $D$}
		\label{FigureOchoDiecinueveConPegote}
	\end{figure}
	
	 For example, if $g=h=2$, KnotInfo \cite{KnotInfo}, via KnotFinder, tells us that this knot is $K=11n_{61}$ or its mirror image. Since $D$ has $p=8$ positive crossings, $n=4$ negative crossings and $|s_AD|=3$, by Corollary~\ref{MainCorollary} there is a torsion element of order two in  $Kh^{2-n,2-|s_AD|+p-2n}(K) = Kh^{-2,-1}(K)$. In fact $Kh^{-2,-1}(11n_{61}\!\,^*) = \mathbb{Z} \oplus \mathbb{Z}_2$.
	
%
\end{example}

We end with two remarks.
\begin{remark}
Theorem~\ref{BasicTheorem} and Corollary~\ref{MainCorollary} are no longer true when $r$ is even, as Example~\ref{ExampleMirrorSixOne} shows (the column for $i=3$, corresponding to $r=2$, has no torsion). However, some generalizations of these results are possible; 
even so, the elements that produce torsion are necessarily different and the combinatorics involved is more convoluted. This is work of a forthcoming paper.  
\end{remark}

\begin{remark}
	In \cite{Kindred}, the non-zero elements in the Khovanov homology are presented in connection with the plumbing construction. Related to the torsion problem, this suggests the following general question: let $S_0\ast S'$ be a plumbing of two surfaces $S_0$ and $S'$. Is there any relation between the torsion elements of $S_0\ast S'$ and those of the factors $S_0$ and $S'$? Notice that Corollary~\ref{MainCorollary} provides examples on this line. To be more specific, in Figure~\ref{FigurePlumbing} we construct a plumbing of two surfaces, $S_0$ and $S'$. The boundary of $S_0$ is the pretzel link $P(-1,-1,-1, 3)$; on the other hand, $S'$ is an example of primitive flat surface. The boundary of the plumbing, $K=\partial (S_0\ast S')$, is the knot for which we obtain torsion elements according to Corollary~\ref{MainCorollary}, for $g=h=3$ and three external circles.  
	\begin{figure}[ht!]
	\labellist
		\pinlabel {$S_0$} at 10 20
		\pinlabel {$S'$} at 140 20
		\pinlabel {$S_0\ast S'$} at 390 20
	\endlabellist
	\begin{center}
		\includegraphics[scale=0.6]{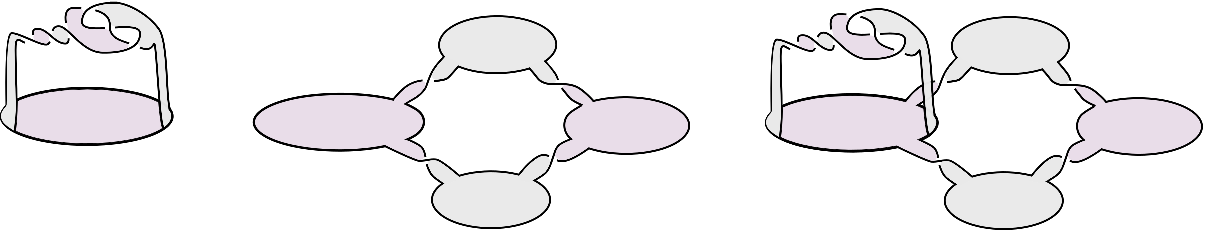}
	\end{center}
	\caption{Surfaces $S_0$ and $S'$ and the plumbing $S_0\ast S'$}
	\label{FigurePlumbing}
	\end{figure}
\end{remark}

\begin{center}
\begin{tabular}{c}
Raquel D\'{\i}az\\
Department of Algebra, Geometry and Topology \\
Facultad de Ciencias Matem\'aticas, Universidad Complutense de Madrid \\
 {\it radiaz@ucm.es} 
\end{tabular}

\begin{tabular}{c}
Pedro M. G. Manch\'on\\
Department of Applied Mathematics to Industrial Engineering \\
ETSIDI, Universidad Polit\'ecnica de Madrid \\
{\it pedro.gmanchon@upm.es} \\
\end{tabular}
\end{center}

\end{document}